    \newtheorem{thm}{Theorem}
      \newtheorem{prop}[thm]{Proposition}
      \newtheorem{conj}[thm]{Conjecture}
      \newtheorem{formula}[thm]{Formula}
      \newtheorem{cor}[thm]{Corollary}
      \newtheorem{lemma}[thm]{Lemma}
      \newtheorem{observation}[thm]{Empirical observation}
  \theoremstyle{definition}
\newcommand{\dd}{\mathrm{d}}
\newcommand{\vect}[1]{\vec{#1}}
\newcommand{\comp}[1]{\overline{#1}}
\newcommand{\dr}[1]{\frac{\mathrm{d}^{#1}}{\mathrm{d}r^{#1}}}
\newcommand{\define}[1]{\textbf{#1}}
\newcommand{\R}{\mathbb{R}}
\newcommand{\N}{\mathbb{N}}
\DeclareMathOperator{\num}{numerator}
\DeclareMathOperator{\den}{denominator}
\title[On the magnitude of odd balls via potential functions]
  {On the magnitude of odd balls\\via potential functions}
\author{Simon Willerton}
\begin{document}

\begin{abstract}
Magnitude is a measure of size defined for certain classes of metric spaces; it arose from ideas in category theory.  In particular, magnitude is defined for compact subsets of Euclidean space and 
Barcel\'o and Carbery gave a procedure for calculating the magnitude of balls in odd dimensional Euclidean spaces.  In this paper their approach is modified in various ways: this leads to an explicit determinantal formula for the magnitude of odd balls and leads to the conjecturing of a simpler formula in terms of Hankel determinants.  This latter formula is proved using a rather different approach in ``The magnitude of odd balls via Hankel determinants of reverse Bessel polynomials'', but the current paper provides the reasoning that led to the formula being conjectured.   Finally, an empirically-tested Hankel determinant formula for the derivative of the magnitude is conjectured.
\end{abstract}
\maketitle
\setcounter{tocdepth}{1}

\section{Introduction}
Throughout this paper we will adopt the convention that $n$ is an odd integer and $n=2p+1$, there will be reminders of this now and again.

\subsection{Background and overview}
Magnitude was introduced by Leinster~\cite{Leinster:Magnitude} as a measure of size for finite metric spaces; this was done by generalizing a notion of Euler characteristic for finite categories.  It soon became clear that the realm of definition of magnitude could be extended to a large class of \emph{infinite} metric spaces, this class includes compact subspaces of Euclidean spaces.  Mark Meckes~\cite{Meckes:PositiveDefinite, Meckes:MagnitudeDimensions} gave various equivalent ways of defining the magnitude on such spaces, one of these ways using a notion of potential function for compact subsets of Euclidean space, another using a notion of weight distribution.

Magnitude was known to be connected with many classical concepts including volume, total scalar curvature~\cite{Willerton:SpheresSurfaces}, and Minkowski dimension~\cite{Meckes:MagnitudeDimensions} and was conjectured to be connected with intrinsic volumes of convex sets~\cite{LeinsterWillerton:AsymptoticMagnitude}.  However, the precise magnitude of any compact set with dimension greater than one was unknown.  Then, utilizing the spherical symmetry, Barcel\'o and Carbery~\cite{BarceloCarbery} were able to give an algorithm for calculating the potential function of any odd dimensional ball of given radius, and from this they could give a procedure for calculating the ball's magnitude.  In general, for a fixed, odd dimension, this process gives the magnitude as a rational function in the radius of the ball.  They were able to compute this function essentially by hand in dimensions $1$, $3$, $5$ and $7$.  Here are the formulas they found.
\begin{align*}
\left|B^ 1 _R\right|
&=
R+1
\\
\left|B^ 3 _R\right|
&=
\frac{R^{3} + 6R^{2} + 12  R + 6}{3!}
\\
\left|B^ 5 _R\right|
&=
\frac{R^{6} + 18 \, R^{5} + 135 \, R^{4} + 525 \, R^{3} + 1080 \, R^{2} + 1080 \, R + 360}{5! \, {\left(R + 3\right)}}
\\
\left|B^ 7 _R\right|
&=
\frac{  R^{10} +
40  R^{9} + 720  R^{8} +  \dots 
+ 1814400  R^{2} + 1209600  R + 302400
}{7!\, {\left(R^{3} + 12  R^{2} + 48  R + 60\right)}}
\end{align*}

In work~\cite{Willerton:Hankel}  inspired by their paper, but logically independent, I gave an explicit formula for $|B_R^n| $ as a ratio of Hankel determinants of reverse Bessel polynomials (see below for what these terms mean); I prove this formula by using the weight distribution approach to magnitude, rather than the potential function approach.  The numerator and denominator in the formula are given combinatorial interpretations as path counting polynomials which means that various properties (which are probably evident in the examples listed above) such as positivity of the coefficients and bounds on degrees are straightforward to obtain.  However, in~\cite{Willerton:Hankel} no explanation is given for how the Hankel determinant formula was arrived at; it is just pulled out of the air and proved to be correct.

One purpose of the current paper is to explain how that formula was conjectured and to provide a bridge from~\cite{BarceloCarbery} to~\cite{Willerton:Hankel}.  Another purpose is show an alternative approach to calculating the magnitude which could be useful.  It is quite possible that the approach given here could be used to prove the Hankel determinant formula, although I have been unable to do this.

\subsection{Two sequences of functions}
\label{sec:Functions}
There are two sequences of functions that will be used throughout the paper.  First we have the sequence of function $(\psi_i\colon (0,\infty)\to \R)_{i=0}^\infty$, we will define this sequence inductively via
\[
\psi_0(r)\coloneqq e^{-r}
\quad
\text{and}
\quad 
\psi_{i+1}(r)\coloneqq -\tfrac{1}{r}\psi_{i}'(r).
\]
We will deduce many properties of these from this definition in Section~\ref{sec:Functions}.  
It is immediate by induction that $\psi_i(r)\in e^{-r}\N[\tfrac{1}{r}]$. In fact this is how we will define the second sequence $(\chi_i)_{i=0}^\infty$, which is a sequence of polynomials called the \define{reverse Bessel polynomials}: $\chi_i(r):=e^{r}r^{2i}\psi_i(r)$.
The first few of these functions are as follows:
\begin{align*}
\psi_0(r) & = e^{-r};
&
\chi_0(R)&=1;\\
\psi_1(r) & = e^{-r}\bigl(
\tfrac{1}{r}
\bigr);
&
\chi_1(R)&=R;
\\
\psi_2(r) & = e^{-r}\bigl(
\tfrac{1}{r^{2}} + \tfrac{1}{r^{3}}
\bigr);
&
\chi_2(R)&=R^2+R;
\\
\psi_3(r) & = e^{-r}\bigl(
\tfrac{1}{r^{3}} + \tfrac{3}{r^{4}} + \tfrac{3}{r^{5}}
\bigr);
&
\chi_3(R)&=R^3+3R^2+3R;
\\
\psi_4(r) & = e^{-r}\bigl(
\tfrac{1}{r^{4}} + \tfrac{6}{r^{5}} + \tfrac{15}{r^{6}} + \tfrac{15}{r^{7}}
\bigr);
&
\chi_4(R)&=R^{4} + 6 R^{3} + 15 R^{2} + 15 R.
\end{align*}
\subsection{Further background}
Marks Meckes~\cite{Meckes:MagnitudeDimensions} showed that the magnitude of a compact subset $X$ of $\R^n$ can be determined via a potential function.  In this context a potential function means a $p$-times differentiable function $h\colon \R^n\to \R$ such that 
\begin{itemize}
\item $h=1$ on $X$;
\item $(1-\Delta)^{p+1} h=0$ on $\R\setminus X$.
\end{itemize}
The magnitude of $X$ can then be calculated via
\[
|X|=
\frac{1}{n!\,\omega_n}\int_{\mathbb{R}^n} (1+\left \| x\right\|^2)^{p+1} \bigl| \widehat{h}(x)\bigr |^2\,\mathrm{d}x,
\]
where $\widehat{h}$ is the Fourier transform of $h$.

Barcel\'o and Carbery showed, provided the boundary of $X$ was sufficiently smooth, that the formula could be written in terms of the volume of $X$ and an integral over the boundary of $X$:
\begin{equation}
\label{eq:BCmag}
|X|=\frac{1}{n!\,\omega_n}\biggl(\text{vol}(X)+\sum_{(p+1)/2<j\le p+1}(-1)^j\binom{p+1}{j}\int_{\partial X} \frac{\partial}{\partial \nu} \Delta^{j-1}h\,\mathrm{d}{s}\biggr),
\end{equation}
where $\frac{\partial}{\partial \nu}$ means the normal derivative at the boundary and $\Delta$ is the Laplacian operator, $\Delta f=\sum_{i=1}^n\frac{\partial^2}{\partial x_i^2}f$.

Leinster and Meckes~\cite{LeinsterMeckes:Survey} later showed that, provided that the potential function was integrable, the magnitude can be expressed simply as an integral of the potential function:
\begin{equation}
\label{eq:LMmag}
\left|X\right| = \frac{1}{n!\,\omega_n}\int_{\R^n}h(x)\, \mathrm{d}x .
\end{equation}

Moving specifically now to the case that $X$ is the $n$-ball, Barcel\'o and Carbery~\cite{BarceloCarbery} use the fact that the potential function $h$ will be spherically symmetric and find the potential function for the $n$-ball by finding all spherically symmetric solutions of the equation $(I-\Delta)^{p+1} g=0$ on $\R^{2p+1}\backslash \{0\}$ with appropriate decay at infinity; the set of solutions is precisely the set of linear combinations of the functions $\psi_0(r),\dots, \psi_p(r)$ which are defined above, with $r$ being the radial coordinate.   This means that the potential function $h$ on the $n$-ball is of the form 
\[
h(r)=\begin{cases}
1&r<R\\
\sum_{i=0}^p \alpha_i\psi_i(r)&r\ge R,
\end{cases}
\]
for some set of coefficients $\{\alpha_i\}_{i=0}^{p}$ which depend on $R$.
They then set up boundary conditions for the differential equation in the following way which is apparently natural for analysts.  They first define the set of differential operators $\{\mathcal{D}^i\}_i=0^\infty$ at the boundary $\partial X$ in terms of powers of the Laplacian $\Delta$ by
$\mathcal{D}^{2j}:=\Delta^j$ and $D^{2j+1}:=\frac{\partial}{\partial\nu}\Delta^j$.  Then the boundary conditions are
\begin{equation}
\label{eq:BCBC}
h(R)=1; 
\quad
\mathcal{D}h(R)=0; 
\quad
\mathcal{D}^2h(R)=0;
\quad
\dots;
\quad
\mathcal{D}^ph(R)=0.
\end{equation}
%
This leads to the following linear system for the coefficients, where  $p$ is assumed even --- the odd case involves removing the bottom row --- and where for reasons of space $\psi_i$ is written for $\psi_i(R)$.
\[
\left(
\begin{smallmatrix}
\psi_0&  \psi_1& \dots&&&\dots&\psi_{p-2}&\psi_{p-1}&\psi_{p}\\
\psi_1& \psi_2&  \dots&&&\dots&\psi_{p-1}&\psi_{p}&\psi_{p+1}\\
2p\psi_1& 2(p-1)\psi_2& \dots&&&\dots&2\cdot 2\psi_{p-1}&2\psi_{p}&0\\
2p\psi_2& 2(p-1)\psi_3& \dots&&&\dots&2\cdot 2\psi_{p}&2\psi_{p+1}&0\\
4p(p-1)\psi_2& 4(p-1)(p-2)\psi_3& \dots&&&\dots&4\cdot 2\cdot 1\psi_{p}&0&0\\
4p(p-1)\psi_3& 4(p-1)(p-2)\psi_4& \dots&&&\dots&4\cdot 2\cdot 1\psi_{p+1}&0&0\\
\vdots&&&&&&&&\vdots\\
\tfrac{2^{\frac{p}{2}}p!}{(p/2)!}\psi_{p/2}
&\tfrac{2^{\frac{p}{2}}(p-1)!}{(p/2-1)!}\psi_{p/2+1}
&\dots&
2^{\frac{p}{2}}(p/2)!\psi_{p}
&0&\dots
&0&0&0
\end{smallmatrix}
\right)
\footnotesize
\begin{pmatrix}
\alpha_0\\
\alpha_1\\
\alpha_2\\
\alpha_3\\
\alpha_4\\
\alpha_5\\
\vdots\\
\alpha_{p}
\end{pmatrix}
=
\begin{pmatrix}
1\\
0\\
1\\
0\\
1\\
0\\
\vdots\\
1
\end{pmatrix}
\] 
For a given $n$, one can then solve this system to find $\alpha_0,\dots,\alpha_p$.  Barcel\'o and Carbery go on to describe a recursive algorithm for using this solution to obtain $\frac{\partial}{\partial \nu} \Delta^{j-1}h(R)$ for $(p+1)/2< j\le p+1$ and hence obtain the magnitude $|B^n_R|$ via formula~\eqref{eq:BCmag}.  This is how they calculated the formulae on the first page.

\subsection{What is in this paper}
In this paper we will do two things differently: we will use a different formulation of the boundary conditions and a different formula, namely~\eqref{eq:LMmag}, for the magnitude.  These lead us to a formula in terms of determinants and thence to the conjecture which is proved in~\cite{Willerton:Hankel}.

We take more naive boundary conditions.  We know that all derivatives of the potential function $h$ up to degree $p$ vanish at the boundary of the ball, so we write that as the vanishing of the higher normal derivatives.  In this spherically symmetric situation, the normal derivative $\frac{\partial}{\partial\nu}$ is just the radial derivative  $\frac{\mathrm{d}}{\mathrm{d}r}$ so we have the following boundary conditions:
\begin{equation}
\label{eq:NaiveBC}
h(R)=1; 
\quad
h'(R)=0; 
\quad
h''(R)=0;
\quad
\dots;
\quad
h^{(p)}(R)=0.
\end{equation}
We will see in Section~\ref{sec:FindingThePotential} that boundary conditions lead to the following linear system for the coefficients of $h$.
\begin{equation}
\label{eq:IntroHankelSystem}
\footnotesize
\begin{pmatrix}
\psi_0(R)&  \psi_1(R)& \psi_2(R)& \psi_3(R)& \dots&&\dots&\psi_{p}(R)\\
\psi_1(R)& \psi_2(R)& \psi_3(R)& \dots&&&\dots&\psi_{p+1}(R)\\
\psi_2(R)& \psi_3(R)& \dots&&&&\dots&\psi_{p+2}(R)\\
\psi_3(R)& \dots&&&&&\dots&\psi_{p+3}(R)\\
\vdots&&&&&&&\vdots\\
\\
\vdots&&&&&&\dots&\psi_{2p-1}(R)\\
\psi_{p}(R)&\dots&&&&&\dots&\psi_{2p}(R)
\end{pmatrix}
\begin{pmatrix}
\alpha_0\\
\alpha_1\\
\alpha_2\\
\alpha_3\\
\vdots\\
\\
\vdots\\
\alpha_{p}
\end{pmatrix}
=
\begin{pmatrix}
1\\
0\\
0\\
0\\
\vdots\\
\\
\vdots\\
0
\end{pmatrix}
\end{equation}
This is a `Hankel system' as the anti-diagonals are constant and it is evidently more symmetric than the Barcel\'o-Carbery system, it also has only one non-trivial entry on the right hand side, so can being viewed as being `simpler' than their system.  On the other hand the Barcel\'o-Carbery system has many zeros in the matrix and only uses $\psi_0,\dots, \psi_{p+1}$ whereas our matrix uses $\psi_0,\dots,\psi_{2p}$.  In any case our matrix can be reduced to theirs using elementary row operations together with the recurrence relation $R^2\psi_{i+2}(R)=\psi_i(R)+(2i+1)\psi_{i+1}(R)$.  The reduction and the proof of the  recurrence relation are left as exercises for the interested reader.

Note that as $e^{R}R^{2i}\psi_i(R)$ is a polynomial, namely the reverse Bessel polynomial $\chi_i(R)$, we can rewrite this as a matrix of polynomials  by scaling appropriately, this means writing $\tilde \alpha_i:=e^{-R}R^{2i}\alpha_i$, and $h(r)=\sum\tilde\alpha_i e^{R-r}(R/r)^{2i} \chi_i(r)$ for $r\ge R$.

We then use Leinster and Meckes formula~\eqref{eq:LMmag} for the magnitude from the potential function and we obtain the magnitude $|B_R^n|$ as a linear combination of the coefficients $\tilde\alpha_0,\dots,\tilde\alpha_p$.
Expressing the magnitude in this way allows it to be thought of as a solution of a linear system.  Using Cramer's Rule leads to Theorem~\ref{thm:MagDet} which gives the following explicit form of the magnitude, where $\xi_{p,0}(R)\dots\xi_{p,p}(R)$  are certain specific integer polynomials. 
   \[
    \left|B^{n}_R\right|
    =
    \frac{(-1)^p}{n!\,R}
    \frac{ \left|
    \begin{matrix}
      \chi_1(R)&  \dots&\chi_{p+1}(R)\\
      \vdots&&\vdots\\
      \chi_{p}(R)&\dots&\chi_{2p}(R) \\
      \xi_{p,0}(R)& \dots& \xi_{p,p}(R)   
    \end{matrix}
    \right|}
    {\left|
    \begin{matrix}
      \chi_0(R)&\dots&\chi_{p}(R)\\
      \vdots&&\vdots\\
      \chi_{p}(R)&\dots&\chi_{2p}(R)
    \end{matrix}
    \right|
    }
  \]

An empirical observation, which was made independently also by Barcel\'o and Carbery, is that the coefficient $\alpha_0$ of the potential function for the $(n+2)$-ball has the same numerator as the \emph{magnitude} of the $n$-ball.  Using the linear system for the $\tilde\alpha_0,\dots, \tilde\alpha_p$ and Cramer's Rule it is possible to calculate the numerator as proportional to the determinant $ [\chi_{i+j+2}(R)]_{i,j=0}^{p}$, this is the determinant of a Hankel matrix so is called a Hankel determinant.  The denominator we get for $|B^n_R|$ is proportional to $\det[\chi_{i+j}(R)]_{i,j=0}^{p}$ and a look at small values of $n$ leads to conjecturing Formula~\ref{conj:HankelFormula}, which is the following
 \[
   |B_R^n|
   \stackrel{?}{=}
   \frac{1}{n!\,R}
   \frac{\det\left( [\chi_{i+j+2}(R)]_{i,j=0}^{p}\right)}
       {\det\left([\chi_{i+j}(R)]_{i,j=0}^{p}\right)}.
\]
Whilst it is easy to check, for instance with SageMath, that Theorem~\ref{thm:MagDet} and Formula~\ref{conj:HankelFormula} give the same answer for $n<40$, I have been unable to prove that the expression in Theorem~\ref{thm:MagDet} is equal to that in Formula~\ref{conj:HankelFormula}.  However, using rather different methods, I have proved Formula~\ref{conj:HankelFormula} in~\cite{Willerton:Hankel}, where it is part of the main theorem.  Those methods, unfortunately, do not give any insight into why such a beautifully symmetric expression for the magnitude exists.

The paper ends with a justification of the following conjecture which relates the derivative of the magnitude function to the first not trivial derivative of the potential function at the boundary:
\[
  \frac{\dd}{\dd R} |B_R^n| \stackrel{?}{=}
 \frac{R^{n-1}}{(n-1)!} \left[ h^{(p+1)}(R)\right]^2.
\]
This is shown to be equivalent to the following explicit formula for the derivative of the magnitude function:
\[
\frac{\dd}{\dd R} |B_R^n| \stackrel{?}{=}
\frac{\bigl(\det
[\chi_{i+j+1}(R)]_{i,j=0}^{p}\bigr)^2
}
{(2p)!\,R^{2}\bigl(\det
[\chi_{i+j}(R)]_{i,j=0}^{p}\bigr)^2
}
.
\]

\section{Solving $(I-\Delta)^{p+1}g(r)=0$ on $\R^{2p+1}\backslash \{0\}$.}
The purpose of this section is to give a more streamlined proof of the result of Barcel\'o and Carbery~\cite{BarceloCarbery} that the functions $\psi_0,\dots,\psi_p$ span the space of asymptotically decaying, spherically symmetric functions on $R^{2p+1}\setminus\{0\}$ which are solutions of the differential equation $(I-\Delta)^{p+1}g(r)=0$.

We begin by proving basic properties of the sequence of functions $(\psi_i\colon \R\setminus\{0\}\to \R)_{i=0}^\infty$ from the definition given in Section~\ref{sec:Functions}.
\begin{thm}
\label{thm:psiDiffEqn}
For $i\ge 0$ we have
$\psi_i''(r)+\tfrac{2i}{r}\psi_i'(r)- \psi_i(r)=0$.
\end{thm}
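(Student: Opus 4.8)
The plan is to prove the identity by induction on $i$, using the defining recurrence $\psi_{i+1}(r) = -\tfrac{1}{r}\psi_i'(r)$. The base case $i=0$ is immediate: $\psi_0(r) = e^{-r}$ satisfies $\psi_0'' - \psi_0 = 0$, and the middle term $\tfrac{2\cdot 0}{r}\psi_0'(r)$ vanishes, so the claimed equation reduces to $e^{-r} - e^{-r} = 0$. So the work is entirely in the inductive step.

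For the inductive step, suppose $\psi_i'' + \tfrac{2i}{r}\psi_i' - \psi_i = 0$; I want to deduce the analogous identity for $\psi_{i+1}$. First I would record the basic relation between $\psi_{i+1}$ and $\psi_i$ in a convenient form: from $\psi_{i+1} = -\tfrac{1}{r}\psi_i'$ we get $\psi_i' = -r\psi_{i+1}$, and differentiating gives $\psi_i'' = -\psi_{i+1} - r\psi_{i+1}'$. Substituting these into the inductive hypothesis yields $-\psi_{i+1} - r\psi_{i+1}' + \tfrac{2i}{r}(-r\psi_{i+1}) - \psi_i = 0$, i.e. $\psi_i = -(2i+1)\psi_{i+1} - r\psi_{i+1}'$. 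That expresses $\psi_i$ cleanly in terms of $\psi_{i+1}$ and its first derivative. Now I differentiate this last relation once more with respect to $r$ to bring in $\psi_{i+1}''$, and combine it with $\psi_i' = -r\psi_{i+1}$ to eliminate $\psi_i$ and $\psi_i'$ entirely; after dividing through by $r$ and collecting terms this should collapse exactly to $\psi_{i+1}'' + \tfrac{2(i+1)}{r}\psi_{i+1}' - \psi_{i+1} = 0$.

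Concretely: differentiating $\psi_i = -(2i+1)\psi_{i+1} - r\psi_{i+1}'$ gives $\psi_i' = -(2i+1)\psi_{i+1}' - \psi_{i+1}' - r\psi_{i+1}'' = -(2i+2)\psi_{i+1}' - r\psi_{i+1}''$. But $\psi_i' = -r\psi_{i+1}$, so $-r\psi_{i+1} = -(2i+2)\psi_{i+1}' - r\psi_{i+1}''$, which rearranges to $\psi_{i+1}'' + \tfrac{2i+2}{r}\psi_{i+1}' - \psi_{i+1} = 0$, i.e. the desired equation with $i$ replaced by $i+1$. This closes the induction.

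I do not anticipate a genuine obstacle here; the only mild care needed is bookkeeping — making sure the coefficient tracks correctly from $2i$ to $2i+2 = 2(i+1)$ and that the constant terms (the $-\psi$ pieces) cancel as claimed — and noting that everything takes place on $r \in (0,\infty)$ where division by $r$ is legitimate, so no issue arises at the origin. An alternative, slightly slicker route would be to work with the reverse Bessel polynomials $\chi_i$ directly, or to observe that $\psi_i$ is essentially a modified spherical Bessel function of the third kind and quote the standard recurrences, but the bare two-line induction above is self-contained and is what I would write.
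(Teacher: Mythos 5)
Your proof is correct and follows essentially the same route as the paper's: an induction on $i$ whose base case is $\psi_0=e^{-r}$ and whose inductive step is a direct algebraic manipulation of the recurrence $\psi_{i+1}=-\tfrac1r\psi_i'$ combined with the inductive hypothesis. The only difference is organizational --- the paper expands $\psi_{i+1}''+\tfrac{2i+2}{r}\psi_{i+1}'$ in terms of $\psi_i$ and recognizes a total derivative of the inductive hypothesis, whereas you invert the recurrence to write $\psi_i'$, $\psi_i''$ in terms of $\psi_{i+1}$ and differentiate once more --- but the computations are the same in substance and your coefficient bookkeeping checks out.
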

\begin{proof}
This is proved by induction.  It is clearly true for $i=0$.  To prove the inductive step, begin by substituting in the inductive definition of $\psi_{i+1}$.
\begin{align*}
\psi_{i+1}''+\tfrac{2i+2}{r}\psi_{i+1}' &=
(-\tfrac{1}{r}\psi_{i}')''+\tfrac{2i+2}{r}(-\tfrac{1}{r}\psi_{i}')'\\
&= -(\tfrac{2}{r^3} \psi_{i}' -\tfrac{2}{r^2} \psi_i'' +\tfrac{1}{r} \psi_i''')  -\tfrac{2i+2}{r}(-\tfrac{1}{r^2}\psi_i' + \tfrac{1}{r} \psi_i'') \\
&= -\tfrac{1}{r}[\psi_i''' +\tfrac{2i}{r}\psi_i'' -\tfrac{2i}{r^2}\psi_i']\\
&= -\tfrac{1}{r} [\psi_i''+\tfrac{2i}{r}\psi_i' ]' =-\tfrac{1}{r}\psi_i' =\psi_{i+1}.
\end{align*}
The next to last equality comes from the inductive hypothesis.
\end{proof}
We can now get to the main property of the sequence $(\psi_i)$ that we are interested in.  We will switch perspectives, fix a positive integer $n$ and consider each $\psi_i$ as a spherically symmetric function on $\R^n\backslash\{0\}$.  Really we should denote $\Psi_i\colon\R^n\backslash\{0\}\to \R$ with $\Psi_i(x)\coloneqq \psi_i(|x|)$, but we will abuse notation and just use $\psi_i$ in the two senses, with $r$ being interpreted at the radial coordinate.

There is the Laplacian operator $\Delta$ on functions on $\R^n$; on a spherically symmetric function  $\psi(r)$ the Laplacian is given by \[\Delta \psi(r) = \psi''(r) + \tfrac{n-1}{r} \psi'(r).\]
We can now reveal the main property of interest.
\begin{thm} For $i\ge 0$
\[(I- \Delta) \psi_i = (n-1 - 2i)\psi_{i+1}.\]
\end{thm}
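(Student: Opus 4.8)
The plan is a direct computation, using only the explicit form of the radial Laplacian together with the ODE established in Theorem~\ref{thm:psiDiffEqn}. First I would write out the left-hand side by substituting the formula for the Laplacian on a spherically symmetric function:
\[
(I-\Delta)\psi_i = \psi_i - \psi_i'' - \tfrac{n-1}{r}\psi_i'.
\]

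Next I would use Theorem~\ref{thm:psiDiffEqn} in the form $\psi_i'' = \psi_i - \tfrac{2i}{r}\psi_i'$ to eliminate the second-derivative term. After substitution the two copies of $\psi_i$ cancel, leaving
\[
(I-\Delta)\psi_i = \tfrac{2i}{r}\psi_i' - \tfrac{n-1}{r}\psi_i' = -\,(n-1-2i)\,\tfrac{1}{r}\psi_i'.
\]
Finally I would recall the inductive definition $\psi_{i+1} = -\tfrac{1}{r}\psi_i'$ to recognise the right-hand side as $(n-1-2i)\psi_{i+1}$, which is exactly the claim.

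There is really no serious obstacle here: the statement follows in three lines. The only thing that needs a little care is the sign bookkeeping — keeping track of the minus sign in $\psi_{i+1}=-\tfrac{1}{r}\psi_i'$ and of the difference $2i-(n-1) = -(n-1-2i)$ — and noting that this identity holds for every fixed $n$ regardless of parity, even though $n$ will later be specialised to an odd integer. One could equally phrase the whole thing as the observation that Theorem~\ref{thm:psiDiffEqn} says $\Delta_{(2i+1)}\psi_i = \psi_i$ where $\Delta_{(m)}$ is the radial Laplacian in dimension $m$, and that changing the ambient dimension from $2i+1$ to $n$ changes $\Delta\psi_i$ by $\tfrac{n-(2i+1)}{r}\psi_i' = -(n-1-2i)\psi_{i+1}$; but the straightforward substitution above is cleaner to write out.
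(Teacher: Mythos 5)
Your proposal is correct and is essentially identical to the paper's own proof: both substitute the radial Laplacian, use Theorem~\ref{thm:psiDiffEqn} to replace $\psi_i''$ by $\psi_i - \tfrac{2i}{r}\psi_i'$, and then invoke the definition $\psi_{i+1}=-\tfrac{1}{r}\psi_i'$. Nothing further is needed.
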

\begin{proof}We just use the definition of the Laplacian together with Theorem~\ref{thm:psiDiffEqn} and the inductive definition in Section~\ref{sec:Functions}.
\begin{align*}(I- \Delta) \psi_i(r) &= \psi_i(r) - \psi_i''(r) - \tfrac{n-1}{r} \psi_i'(r)\\
&= \tfrac{2i}{r}\psi'_i(r) - \tfrac{n-1}{r} \psi_i'(r)\\
&= (n-1-2i)\psi_{i+1}(r)
\end{align*}
\end{proof}
In other words, applying the differential operator $I-\Delta$ moves us up the ladder of functions. 
\begin{cor} For $i,k\ge 0$,
\[(I- \Delta)^k \psi_i = \Bigl(\prod_{j=1}^k \bigl(n+1-2(i+j)\bigr)\Bigr)\psi_{i+k}\]
\end{cor}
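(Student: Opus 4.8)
The plan is to prove this by a straightforward induction on $k$, using the preceding theorem $(I-\Delta)\psi_i = (n-1-2i)\psi_{i+1}$ as the one-step version and simply iterating it.

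For the base case $k=0$, both sides reduce to $\psi_i$: the left-hand side because $(I-\Delta)^0$ is the identity, and the right-hand side because the empty product $\prod_{j=1}^{0}(\cdots)$ equals $1$. For the inductive step, I would assume the formula holds for some $k\ge 0$ and then compute
\[
(I-\Delta)^{k+1}\psi_i
=
(I-\Delta)\Bigl[(I-\Delta)^{k}\psi_i\Bigr]
=
\Bigl(\prod_{j=1}^{k}\bigl(n+1-2(i+j)\bigr)\Bigr)\,(I-\Delta)\psi_{i+k},
\]
pulling the scalar out by linearity of $I-\Delta$. Applying the preceding theorem with $i$ replaced by $i+k$ gives $(I-\Delta)\psi_{i+k}=(n-1-2(i+k))\psi_{i+k+1}$.

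The only thing to check is the index bookkeeping: $n-1-2(i+k) = n+1-2(i+k+1)$, which is exactly the $j=k+1$ factor of the product. Substituting, the scalar becomes $\prod_{j=1}^{k+1}(n+1-2(i+j))$ and the function becomes $\psi_{i+k+1}=\psi_{i+(k+1)}$, completing the induction. There is no real obstacle here — the statement is a routine consequence of iterating the previous theorem; the only mild subtlety is matching the shifted factor $n-1-2(i+k)$ to the reindexed product term, and confirming that the empty-product convention handles the base case cleanly.
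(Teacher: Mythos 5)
Your induction is correct and is exactly the argument the paper intends (the corollary is stated there without proof as an immediate iteration of the preceding theorem); the index check $n-1-2(i+k)=n+1-2(i+k+1)$ is the only point of substance and you handle it properly.
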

If $n$ is odd, with $p=(n-1)/2$ then $(I-\Delta)\psi_{p} =0$, so we find that $I-\Delta$ is nilpotent on certain $\psi_i$.
\begin{cor}
If $n=2p+1$ and $0\le i\le p$ then 
\[(I-\Delta)^{p+1} \psi_i=0.\]
\end{cor}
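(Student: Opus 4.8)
The plan is to deduce this immediately from the preceding Corollary, which computes $(I-\Delta)^k\psi_i$ as a scalar multiple of $\psi_{i+k}$, the scalar being $\prod_{j=1}^k\bigl(n+1-2(i+j)\bigr)$. First I would specialize that Corollary to $k=p+1$ and substitute $n=2p+1$, so that the scalar becomes
\[
\prod_{j=1}^{p+1}\bigl(2p+2-2(i+j)\bigr)=2^{p+1}\prod_{j=1}^{p+1}\bigl(p+1-i-j\bigr).
\]
Then I would observe that, since $0\le i\le p$, the index $j=p+1-i$ lies in the range $\{1,\dots,p+1\}$ over which the product is taken, and for that value of $j$ the factor $p+1-i-j$ equals $0$. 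Hence the entire product vanishes, giving $(I-\Delta)^{p+1}\psi_i=0$.

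There is really no obstacle here: the content is entirely in the earlier Corollary, and this statement is just the remark that the telescoping product of linear factors hits zero within the allowed range of indices whenever $0\le i\le p$. The only thing worth spelling out explicitly is the index bookkeeping — that $j=p+1-i$ is a legitimate value of the product index — since that is precisely the point at which the oddness hypothesis $n=2p+1$ (equivalently, $p$ being a nonnegative integer) is used. An alternative, slightly more self-contained route would be to induct downward from $i=p$: the earlier result $(I-\Delta)\psi_p = (n-1-2p)\psi_{p+1}=0$ handles the top case, and for $i<p$ one writes $(I-\Delta)^{p+1}\psi_i=(I-\Delta)^{p}\bigl((I-\Delta)\psi_i\bigr)=(n-1-2i)(I-\Delta)^{p}\psi_{i+1}$ and applies the inductive hypothesis, noting $p$ applications of $I-\Delta$ suffice once we have moved up to $\psi_{i+1}$. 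I would present the first route as the main proof since it is a one-line consequence of the Corollary already in hand.
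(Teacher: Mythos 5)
Your proof is correct and follows the same route the paper intends: the corollary is an immediate consequence of the preceding corollary's product formula, since with $n=2p+1$ and $k=p+1$ the factor indexed by $j=p+1-i$ vanishes, and $0\le i\le p$ guarantees this $j$ lies in $\{1,\dots,p+1\}$. The index check and the remark that this is where oddness of $n$ enters are exactly the right things to spell out.
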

We can now prove the theorem we were aiming for.
\begin{thm}
\label{thm:symsolns}
If $n$ is odd and $g\colon \R^n\{0\} \to \R$ is a spherically symmetric function with $g(r)\to 0$ as $r\to \infty$ satisfying
  \[(I-\Delta)^{(n+1)/2} g=0.\]
then $g$ is a linear combination of $\{\psi_0,\dots,\psi_{p}\}$.
\end{thm}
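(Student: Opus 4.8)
The plan is to recognise the equation as an ordinary differential equation in the radial variable, identify its full solution space, and then use the decay hypothesis to cut that space down. Restricting to the radial coordinate makes $g$ a function on $(0,\infty)$, and since the radial part of $I-\Delta$ is the operator $I-\frac{\dd^2}{\dd r^2}-\frac{n-1}{r}\frac{\dd}{\dd r}$, the equation $(I-\Delta)^{(n+1)/2}g=0$ becomes a linear ODE of order $2p+2$ whose coefficients are smooth on $(0,\infty)$ and whose leading coefficient is the non-zero constant $(-1)^{p+1}$ (and a $C^{2p+2}$ solution is automatically smooth by bootstrapping the equation). Hence its solution space on $(0,\infty)$ is exactly $(2p+2)$-dimensional. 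I would produce a basis consisting of the $p+1$ functions $\psi_0,\dots,\psi_p$, which we have just seen are solutions and which decay at infinity, together with $p+1$ companion solutions that blow up at infinity; the hypothesis $g(r)\to 0$ as $r\to\infty$ will then force the coefficients of the companions to vanish.

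To manufacture the blowing-up solutions, I would re-run the construction of Section~\ref{sec:Functions} with the growing exponential: set $\phi_0(r)\coloneqq e^{r}$ and $\phi_{i+1}(r)\coloneqq-\tfrac1r\phi_i'(r)$. Since $\phi_0''-\phi_0=0$, the induction proving Theorem~\ref{thm:psiDiffEqn} applies verbatim to give $\phi_i''+\tfrac{2i}{r}\phi_i'-\phi_i=0$, and then the theorem and the two corollaries following it carry over word for word, so that $(I-\Delta)^{p+1}\phi_i=0$ for $0\le i\le p$. In parallel with $\psi_i\in e^{-r}\N[\tfrac1r]$, a routine induction shows $\phi_i\in e^{r}\Z[\tfrac1r]$, with $e^{-r}\phi_i(r)=(-1)^i r^{-i}+O(r^{-i-1})$ as $r\to\infty$; equivalently $r^{2i}e^{-r}\phi_i(r)$ is a polynomial of degree $i$, a signed counterpart of $\chi_i$. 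Thus $\psi_i(r)\sim e^{-r}r^{-i}$ while each $\phi_i$ carries the factor $e^{r}$, and the polynomials $e^{-r}\phi_i$ in $1/r$ have pairwise distinct orders of vanishing at infinity.

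Next I would check that $\{\psi_0,\dots,\psi_p,\phi_0,\dots,\phi_p\}$ is linearly independent, whence, having $2p+2$ elements, it is a basis of the solution space. In a vanishing combination $\sum_i a_i\psi_i+\sum_i b_i\phi_i=0$, multiplying through by $e^{-r}$ and letting $r\to\infty$ makes the $\psi$-part vanish, so the polynomial $\sum_i b_i\bigl(e^{-r}\phi_i(r)\bigr)$ in $1/r$ tends to $0$; multiplying successively by $r,r^2,\dots$ and reading off limits, the distinct orders of vanishing force $b_0=\dots=b_p=0$, after which $\sum_i a_i\psi_i=0$ gives $a_0=\dots=a_p=0$ on multiplying by $e^{r}r^{2p}$ and comparing degrees of the resulting polynomial identity (each $\chi_i$ being monic of degree $i$). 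Then for a general $g$ as in the statement, write $g=\sum_i a_i\psi_i+\sum_i b_i\phi_i$; each $\psi_i(r)\to0$, so $\sum_i b_i\phi_i(r)\to 0$, but $\sum_i b_i\phi_i(r)=e^{r}\sum_i b_i\bigl(e^{-r}\phi_i(r)\bigr)$ is asymptotic to $c\,e^{r}r^{-k}$ with $c\neq0$ unless every $b_i$ vanishes. Hence $g=\sum_{i=0}^{p}a_i\psi_i$, as required.

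The one genuinely delicate point is the identification of the \emph{whole} solution space: one really has to exhibit the second ladder $\phi_0,\dots,\phi_p$ and verify its independence from $\psi_0,\dots,\psi_p$, rather than hand-wave about ``the other solution of the second-order equation''; everything else is routine bookkeeping with exponential-times-polynomial asymptotics. An alternative route that sidesteps the $\phi_i$ is the substitution $g(r)=r^{-(n-1)/2}u(r)$, which conjugates $I-\Delta$ into $I-\frac{\dd^2}{\dd r^2}+\frac{p(p-1)}{r^2}$ and lets one solve by reduction of order starting from $\psi_p$; but the ladder argument above has the advantage of re-using the machinery just set up.
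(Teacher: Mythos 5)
Your argument is essentially the paper's own proof: the paper likewise introduces the companion ladder $\overline\psi_0(r)=e^r$, $\overline\psi_{i+1}=-\tfrac1r\overline\psi_i'$, notes that all the preceding arguments carry over so that these give $p+1$ further solutions of the order-$(2p+2)$ ODE, and then discards them using the decay hypothesis. The only difference is that you spell out the linear independence and the exponential-times-polynomial asymptotics that the paper asserts without detail (and you correctly state the order as $2p+2=n+1$, where the paper's proof has a typo saying $n-1$).
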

\begin{proof}
If we define $\overline\psi_0(r):= e^r$ and $\overline\psi_{i+1}(r):=-\tfrac{1}{r}\overline\psi_i(r)$ then you can easily see $\overline\psi_i\in e^{r}\N[\tfrac{1}{r}]$.  All of the arguments used above for the sequence $(\psi_i)$ go through unchanged for $(\overline \psi_i)$, so in particular we find that
$(I-\Delta)^{p+1} \psi_i=0$ for $0\le i\le p$.  This means that the set $\{\psi_0,\dots,\psi_{p},\psi_0,\dots,\psi_{p}\}$ gives us $n-1$ linearly independent solutions to $(I-\Delta)^{p+1} g=0$ which is an order $n-1$ linear ordinary differential equation, so our solutions span the space of solutions.  However, for a solution to decay, as required, it must be a linear combination of the first half of those. 
\end{proof}

%
%

\section{Finding the potential function $h$}
\label{sec:FindingThePotential}
In this section we will find the potential function $h$ of the odd-ball $B^n_R$ in terms of the solution set of a particularly symmetric linear system of equations involving the reverse Bessel polynomials.  

The potential function $h$ of the ball $B^n_R$ will be spherically symmetric, so can be thought of as a radial function $h\colon [0,\infty)\to \R$.  We will first summarize the properties of the potential function.

\begin{thm}[{\cite[Section~3]{BarceloCarbery}}]
The potential function $h\colon [0,\infty)\to \R$ of the odd-ball $B_R^n$, for $n=2p+1$ is such that
\begin{enumerate}[(a)]
\item $h\equiv 1$ on $[0,R]$;
\item \label{item:second}
   $(I-\Delta)^{p+1}h=0$ on $(R,\infty)$;
\item \label{item:third}
  $h(r)\to 0 $ as $r\to \infty$;
\item \label{item:fourth}
$h$ is $p$ times differentiable.
\end{enumerate}
\end{thm}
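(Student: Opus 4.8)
The plan is to obtain all four properties from Meckes's variational description of the potential function, using interior elliptic regularity and the rotational symmetry of the ball. Recall from Meckes's work that for a compact $X\subset\R^n$ with $n=2p+1$ the potential function $h$ exists and is unique, and may be characterized as the function of least norm in the Bessel-potential space of order $p+1$ --- that is, the minimizer of $\int_{\R^n}(1+\left\|x\right\|^2)^{p+1}\bigl|\widehat{f}(x)\bigr|^2\,\dd x$ over all $f$ in that space with $f\equiv 1$ on $X$ --- whose Euler--Lagrange equation is precisely $(I-\Delta)^{p+1}h=0$, understood weakly, on the open set $\R^n\setminus X$. Specializing to $X=B^n_R$, property (a) is immediate: in the radial coordinate $X$ is the interval $[0,R]$ and $h\equiv 1$ there by definition.

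Property (c) follows from a Sobolev embedding, since the smoothness exceeds half the dimension: $2(p+1)=n+1>n$, so the Bessel-potential space of order $p+1$ embeds continuously into the continuous functions on $\R^n$ vanishing at infinity. Concretely, $(1+\left\|\cdot\right\|^2)^{-(p+1)}$ is integrable on $\R^n$ (as $2(p+1)>n$), so writing $\widehat{h}=(1+\left\|\cdot\right\|^2)^{-(p+1)/2}\cdot(1+\left\|\cdot\right\|^2)^{(p+1)/2}\widehat{h}$ exhibits $\widehat{h}$ as a product of two $L^2$ functions, hence $\widehat{h}\in L^1$ and $h\in C_0(\R^n)$; thus $h(r)\to 0$ as $r\to\infty$. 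The same uniqueness statement yields the spherical symmetry used implicitly throughout: for any $\rho\in O(n)$ the function $h\circ\rho$ has the same norm as $h$ and still equals $1$ on the rotation-invariant set $B^n_R$, so $h\circ\rho=h$ and $h$ depends only on $r$.

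For property (b), I would invoke interior elliptic regularity for $(I-\Delta)^{p+1}$, which is elliptic of order $n+1$ with constant coefficients: a weak solution of $(I-\Delta)^{p+1}h=0$ on the open set $\{r>R\}$ is automatically $C^\infty$ (indeed real-analytic) there and satisfies the equation classically. Written for the radial profile this is exactly the ordinary differential equation $(I-\Delta)^{p+1}g(r)=0$ on $(R,\infty)$ treated in the previous section, so by Theorem~\ref{thm:symsolns} together with the decay established in (c), $h$ restricted to $[R,\infty)$ is a linear combination $\sum_{i=0}^{p}\alpha_i\psi_i$.

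The delicate point --- and the one I expect to be the main obstacle --- is property (d): that the patched function is $C^p$ across the interface $\{r=R\}$ and, crucially, in general no better. On $[0,R]$ the profile is the constant $1$, so from inside $h(R)=1$ and $h'(R)=\dots=h^{(p)}(R)=0$; the claim is that the minimizer matches the exterior solution to order exactly $p$. The way to make this precise is through the transmission (matching) conditions that the Euler--Lagrange equation forces along a smooth interface: testing the weak form of $(I-\Delta)^{p+1}h=0$ against functions supported across $\{r=R\}$, together with the fact that membership in the Bessel-potential space of order $p+1$ already precludes jumps in derivatives of order $\le p$ (trace theory), yields the $p+1$ scalar conditions $h^{(k)}(R^-)=h^{(k)}(R^+)$ for $0\le k\le p$, while derivatives of order $p+1,\dots,2p+1$ are left free; indeed the jump in $h^{(p+1)}$ is precisely what supplies the boundary term in~\eqref{eq:BCmag}. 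These $p+1$ conditions, written in the basis $\{\psi_0,\dots,\psi_p\}$, are exactly the linear system~\eqref{eq:IntroHankelSystem}, which determines the $\alpha_i$ and hence shows the patched $h$ is well defined and $C^p$. Pinning down which transmission conditions the order-$(n+1)$ variational problem enforces, and confirming that $C^p$ is optimal rather than $C^{p+1}$, is the substantive analytic content; here I would defer to the original argument of Barcel\'o and Carbery, as the other three properties are essentially formal consequences of Meckes's characterization plus elliptic regularity.
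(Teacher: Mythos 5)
The paper offers no proof of this theorem at all: it is stated as an imported result, cited to Section~3 of Barcel\'o and Carbery~\cite{BarceloCarbery}, and everything downstream (Theorem~\ref{thm:symsolns} onwards) takes it as given. Your proposal is therefore not comparable to an argument in the paper but to the one in the cited source, and it is a sound reconstruction along essentially the same lines: Meckes's variational characterization of $h$ as the norm-minimizer in $H^{(n+1)/2}$ gives (a) by definition and (b) as the weak Euler--Lagrange equation off $X$; your Sobolev/Riemann--Lebesgue argument for (c) is correct since $2(p+1)=n+1>n$; interior elliptic regularity upgrades the weak solution to a classical one so that Theorem~\ref{thm:symsolns} applies; and the uniqueness-plus-rotation argument for radial symmetry is the standard one. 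The only place the sketch is genuinely thin is (d), and you say so yourself: the assertion that membership in $H^{p+1}(\R^n)$ forbids jumps in derivatives of order $\le p$ across the sphere (because a jump in the $k$th normal derivative of a piecewise-smooth function puts it outside $H^{k+1/2}$) is the correct mechanism, but it is stated rather than proved, and your aside that the jump in $h^{(p+1)}$ ``precisely'' supplies the boundary term in~\eqref{eq:BCmag} is loose --- those boundary integrands involve derivatives of orders up to $n$, not just $p+1$. Since you explicitly defer this point to \cite{BarceloCarbery}, the proposal is best read as a correct outline with the same division of labour as the paper itself, which likewise leaves the analytic content of (d) to the original source; what your version buys is an explanation of \emph{why} exactly $p+1$ matching conditions appear, which is what makes the linear system~\eqref{eq:IntroHankelSystem} square.
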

By conditions (\ref{item:second}) and (\ref{item:third}) with Theorem~\ref{thm:symsolns} above, we obtain the following corollary.

\begin{cor}  There is a set of coefficients $\{\alpha_i\}_i$ (implicitly dependent on $R$) such that the potential function of the $n$-ball $B_R^n$ is of the form
  \[h(r)=\sum_{i=0}^{p} \alpha_i \psi_i(r)\quad\text{for }r>R.\]
\end{cor}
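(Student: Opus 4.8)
The plan is to read this off directly from Theorem~\ref{thm:symsolns}. Fix $n=2p+1$ and let $h$ be the potential function of $B^n_R$. Restrict attention to the exterior region and set $g\coloneqq h|_{(R,\infty)}$, viewed as a spherically symmetric function of the radial variable. By property~(\ref{item:second}) it satisfies $(I-\Delta)^{p+1}g=0$ on its domain, and by property~(\ref{item:third}) it tends to $0$ as $r\to\infty$. These are exactly the hypotheses of Theorem~\ref{thm:symsolns}, except that $g$ lives on the exterior region $\{r>R\}$ rather than on all of $\R^n\setminus\{0\}$.

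The first step is to dispose of that mismatch. The proof of Theorem~\ref{thm:symsolns} is entirely an argument about the radial ODE: for a spherically symmetric function, $(I-\Delta)^{p+1}g=0$ is a linear ordinary differential equation of order $2p+2$ in $r$, and the argument produces $2p+2$ solutions, namely $\psi_0,\dots,\psi_p,\overline{\psi}_0,\dots,\overline{\psi}_p$, each defined and real-analytic on all of $(0,\infty)\supset(R,\infty)$. Nothing in that reasoning uses the lower endpoint of the interval, so it applies verbatim on $(R,\infty)$: these $2p+2$ functions are solutions there, and they remain linearly independent on $(R,\infty)$ (a linear relation among real-analytic functions holding on $(R,\infty)$ would extend to $(0,\infty)$, or equivalently one may evaluate the Wronskian at any point of $(R,\infty)$). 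Hence they form a basis of the solution space on $(R,\infty)$, and we may write $g=\sum_{i=0}^p\alpha_i\psi_i+\sum_{i=0}^p\beta_i\overline{\psi}_i$ for some constants $\alpha_i,\beta_i$ depending on $R$.

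The second step is to use the decay of $g$ to force $\beta_0=\dots=\beta_p=0$. Each $\overline{\psi}_i$ has the form $e^{r}\bigl(r^{-i}+O(r^{-i-1})\bigr)$, so if $i_0$ is the least index with $\beta_{i_0}\neq0$ then $\sum_i\beta_i\overline{\psi}_i=\beta_{i_0}e^{r}r^{-i_0}\bigl(1+o(1)\bigr)$ as $r\to\infty$, which is unbounded; since $\sum_i\alpha_i\psi_i\to0$, this contradicts $g(r)\to0$ unless all $\beta_i$ vanish. Therefore $h(r)=g(r)=\sum_{i=0}^p\alpha_i\psi_i(r)$ for $r>R$, which is the assertion.

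I do not expect a genuine obstacle here: all of the substance is already contained in Theorem~\ref{thm:symsolns} and in the asymptotic shape of the $\psi_i$ and $\overline{\psi}_i$. The only point that deserves an explicit sentence is the passage from the domain $\R^n\setminus\{0\}$ to the exterior region $\{r>R\}$, and this is harmless precisely because Theorem~\ref{thm:symsolns} is really a statement about the radial ODE; indeed, one could have phrased that theorem on an arbitrary interval $(a,\infty)$ from the outset, in which case the corollary is immediate with no extra comment.
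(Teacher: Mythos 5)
Your proposal is correct and follows essentially the same route as the paper, which derives this corollary directly from Theorem~\ref{thm:symsolns} together with properties (\ref{item:second}) and (\ref{item:third}) of the potential function, offering no further argument. The one point you add --- that Theorem~\ref{thm:symsolns} is stated on $\R^n\setminus\{0\}$ while $h$ is only governed by the ODE on $(R,\infty)$ --- is silently elided in the paper, and your justification of why the mismatch is harmless (the proof is really about the radial ODE on an interval, and the decay argument kills the $\overline{\psi}_i$ just as before) is sound.
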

We will use condition (\ref{item:fourth}) at $r=R$ to find these coefficients.  We have $p+1$ unknowns and the final condition essentially gives us $p+1$ constraints at the boundary, so we might hope that these condition determine the potential function uniquely.  That does indeed turn out to be the case.  

Using the fact that $h(r)$ is $p$ times differentiable and is constantly $1$ for $r\le R$ we have the following boundary conditions at $r=R$:
 \begin{equation}
 \label{eqn:naiveboundary}
 h(R)=1,\  h^{(1)}(R)=0,\  h^{(2)}(R)=0,\  \dots,\  h^{(p)}(R)=0.
 \end{equation}
We will need the following lemma, the proof of which is a straightforward induction.
\begin{lemma}
\label{lemma:RecursiveSequence}
If $(g_j(r))_{j=0}^\infty$ is a sequence of functions with $g_{j+1}(r) = -\tfrac{1}{r} g_j'(r)$ then for $j>0$
\[g_j(r)= \sum_{k=1}^j \frac{(-1)^k d_k^j g_0^{(k)}\!(r)}{r^{2j-k}},\]
for non-negative integer constants $d_k^j$ which satisfy $d_k^{j+1}=d_{k-1}^j +(2j-k)d_k^j$, with $d_0^j=0=d_{j+1}^j$ and $d_1^1=1$.  [As $d_0^j=0$ for $j>0$ we can extend the lower limit of the summation to $k=0$.]

In fact, $d_k^j = c_{2j-k}^j$ in the notation of~\cite{BarceloCarbery} and explicitly
\[d_k^j = \frac{(2j-k-1)!}{2^{j-k}(j-k)!\,(k-1)!}\qquad\text{for }1\le k\le j.\]
\qed
\end{lemma}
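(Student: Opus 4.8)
The plan is to prove the displayed formula for $g_j$ together with the recursion for the coefficients $d_k^j$ simultaneously, by induction on $j$, and then to verify the explicit factorial formula separately by checking that it satisfies the same recursion and the same initial data.

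For the base case $j=1$ one simply notes that $g_1(r) = -\tfrac1r g_0'(r) = \tfrac{(-1)^1 d_1^1\, g_0^{(1)}(r)}{r^{2\cdot 1 - 1}}$ with $d_1^1 = 1$, which is exactly the claimed form. For the inductive step, assume $g_j(r) = \sum_{k=1}^{j} (-1)^k d_k^j\, g_0^{(k)}(r)\, r^{-(2j-k)}$ and differentiate term by term using $\tfrac{\dd}{\dd r}\bigl(g_0^{(k)}(r)\, r^{-(2j-k)}\bigr) = g_0^{(k+1)}(r)\, r^{-(2j-k)} - (2j-k)\, g_0^{(k)}(r)\, r^{-(2j-k+1)}$, then multiply by $-\tfrac1r$. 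In the resulting expression for $g_{j+1}$, the ``derivative-raising'' piece, after the reindexing $k \mapsto k-1$, contributes $(-1)^k d_{k-1}^j\, g_0^{(k)}(r)\, r^{-(2(j+1)-k)}$, while the ``power-lowering'' piece contributes $(-1)^k (2j-k)\, d_k^j\, g_0^{(k)}(r)\, r^{-(2(j+1)-k)}$. Adopting the boundary conventions $d_0^j = 0$ and $d_{j+1}^j = 0$ so that the two sums line up over $k = 1,\dots,j+1$, and collecting the coefficient of $(-1)^k g_0^{(k)}(r)\, r^{-(2(j+1)-k)}$, gives $g_{j+1}(r) = \sum_{k=1}^{j+1} (-1)^k \bigl(d_{k-1}^j + (2j-k) d_k^j\bigr) g_0^{(k)}(r)\, r^{-(2(j+1)-k)}$. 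This establishes the structural form at level $j+1$ and forces $d_k^{j+1} = d_{k-1}^j + (2j-k) d_k^j$; non-negativity and integrality of the $d_k^j$ then follow inductively, since $2j - k \ge 0$ throughout the relevant range.

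It remains to identify the solution of this recurrence with the stated closed form. Set $e_k^j := \dfrac{(2j-k-1)!}{2^{j-k}(j-k)!\,(k-1)!}$ for $1 \le k \le j$ and $e_k^j := 0$ otherwise. One checks $e_1^1 = 1$, and that $e_j^j = 1$, $e_0^j = e_{j+1}^j = 0$, consistently with the boundary conventions; then one verifies $e_k^{j+1} = e_{k-1}^j + (2j-k)\,e_k^j$ by factoring $\dfrac{(2j-k)!}{2^{j-k+1}(j-k+1)!\,(k-1)!}$ out of all three terms, whereupon the identity reduces to $(k-1) + 2(j-k+1) = 2j-k+1$. Since the recurrence together with $d_1^1 = 1$ and the boundary conventions determines the $d_k^j$ uniquely, $d_k^j = e_k^j$. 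The identification $d_k^j = c_{2j-k}^j$ in the notation of~\cite{BarceloCarbery} is then just a matter of matching the two recurrences.

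There is no substantive obstacle here: the only thing requiring care is the bookkeeping of the exponent shifts and the consistent use of the two boundary conventions $d_0^j = 0$, $d_{j+1}^j = 0$ when merging the two sums in the inductive step — which is exactly why the statement can fairly be flagged as a straightforward induction.
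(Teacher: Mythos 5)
Your proof is correct and follows exactly the route the paper intends: the paper simply asserts that the lemma is ``a straightforward induction'' and omits the details, and your induction on $j$ (differentiating term by term, reindexing, and merging the two sums via the conventions $d_0^j=0=d_{j+1}^j$), followed by verification that the closed form satisfies the same recurrence and initial data, is precisely that omitted argument carried out in full. No gaps.
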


Now we use the potential function $h(r)$ to define a sequence of functions 
\[
  h_0(r)=\sum_{i=0}^{p} \alpha_i \psi_i(r)
  \quad\text{ and }\quad
  h_{j+1}(r) = -\tfrac{1}{r} h_j'(r),
\] 
so $h_0(r)=h(r)$ for $r\ge R$.  

By Lemma~\ref{lemma:RecursiveSequence} above, the vanishing of the derivatives from $h^{(1)}(R)$ up to  $h^{(j)}(R)$ in~\eqref{eqn:naiveboundary} implies the vanishing $h_j(R)$, so the boundary conditions become
  \begin{equation}
  \label{eqn:lessnaiveboundary}
  h_0(R)=1,\  h_{1}(R)=0,\  h_{2}(R)=0,\  \dots,\  h_{p}(R)=0.
  \end{equation}

However, by using induction and the recursive definition of the function sequence $(\psi_i(r))_{i=1}^{\infty}$, we find 
\[
  h_j(r)=\sum_{i=0}^{p} \alpha_i \psi_{i+j}(r).
\]

Thus we can write these conditions~\eqref{eqn:lessnaiveboundary} as the following system of linear equations.
\[
\begin{pmatrix}
\psi_0(R)&  \psi_1(R)& \psi_2(R)&\dots&\psi_{p}(R)\\
\psi_1(R)& \psi_2(R)& \psi_3(R)& \dots&\psi_{p+1}(R)\\
\psi_2(R)& \psi_3(R)& \psi_4(R)&\dots&\psi_{p+2}(R)\\
\vdots&\vdots&\vdots&&\vdots\\
\psi_{p}(R)&\psi_{p+1}(R)&\psi_{p+2}(R)&\dots&\psi_{2p}(R)
\end{pmatrix}
\begin{pmatrix}
\alpha_0\\
\alpha_1\\
\alpha_2\\
\vdots\\
\alpha_{p}
\end{pmatrix}
=
\begin{pmatrix}
1\\
0\\
0\\
\vdots\\
0
\end{pmatrix}
\] 
The matrix on the left has constant anti-diagonals, and a matrix of this form is known as a \define{Hankel matrix}.
This system is somewhat different to that of Barcel\'o and Carbery, offering an alternative approach.  It is structurally simpler, but involves more terms.  One advantage of this approach will be shown later with the conjectural closed form for the magnitude. 

By rescaling each $\alpha_i$ we can make the entries in the Hankel matrix into reverse Bessel polynomials, that is to say we can get rid of exponentials and negative powers of $r$.  Write $\tilde \alpha_i:= e^{-R}R^{-2i}\alpha_i$ and write $\chi_i= e^R R^{2i}\psi_i$ for the reverse Bessel polynomials then the above system, $\sum_{i=0}^{p} \psi_{i+j}(R)\alpha_i = \delta_{0,j}$ becomes $R^{-2j}\sum_{i=0}^{p} \chi_{i+j}(R)\tilde\alpha_i = \delta_{0,j}$, i.e.
  \[
  \sum_{i=0}^{p} \chi_{i+j}(R)\tilde\alpha_i = \delta_{0,j} \quad\text{for }j=0,\dots,p, 
  \]
Summarizing this all in matrix form we have the following.
\begin{thm}
\label{thm:HankelLinearSystem}
The function
\[
  h(r)=\begin{cases}1& r\in [0,R)\\
    \sum_{i=0}^{p}e^{R-r}(R/r)^{2i} \tilde\alpha_i \chi_{i}(r)& r\in[R,\infty)
    \end{cases}
\]
is the potential function on the ball $B^n_R$ if the sequence $(\tilde \alpha_i(r))_{i=0}^{p}$ is a solution of the following linear system:
\[
\begin{pmatrix}
\chi_0(R)&  \chi_1(R)&\dots&\chi_{p}(R)\\
\chi_1(R)& \chi_2(R)& \dots&\chi_{p+1}(R)\\
\vdots&\vdots&&\vdots\\
\chi_{p}(R)&\chi_{p+1}(R)&\dots&\chi_{2p}(R) \\
\end{pmatrix}
\begin{pmatrix}
\tilde\alpha_0\\
\tilde\alpha_1\\
\vdots\\
\tilde \alpha_{p}
\end{pmatrix}
=
\begin{pmatrix}
1\\
0\\
\vdots\\
0
\end{pmatrix}
.
\]
\qed 
\end{thm}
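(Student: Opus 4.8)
The plan is to verify that, once $(\tilde\alpha_i)_{i=0}^{p}$ solves the displayed Hankel system, the piecewise function $h$ has the four properties (a)--(d) of the potential function recorded at the start of Section~\ref{sec:FindingThePotential}, and then to appeal to uniqueness of that potential function (cf.\ the remark before~\eqref{eqn:naiveboundary}). It is convenient to put $\alpha_i:=e^{R}R^{2i}\tilde\alpha_i$, so that on $[R,\infty)$ the formula for $h$ becomes $h(r)=\sum_{i=0}^{p}\alpha_i\psi_i(r)$, since $e^{R-r}(R/r)^{2i}\chi_i(r)=e^{R}R^{2i}\psi_i(r)$ by the definition $\chi_i(r)=e^{r}r^{2i}\psi_i(r)$. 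In this form, property (a) holds by construction; property (b) follows from the corollary that $(I-\Delta)^{p+1}\psi_i=0$ for $0\le i\le p$, whence $(I-\Delta)^{p+1}h=\sum_i\alpha_i(I-\Delta)^{p+1}\psi_i=0$ on $(R,\infty)$; and property (c) holds because each $\psi_i(r)$ is $e^{-r}$ times a fixed element of $\N[1/r]$, hence tends to $0$ as $r\to\infty$.

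The real work is property (d), $p$-fold differentiability. As $h$ is smooth on $\{|x|<R\}$ and on $\{|x|>R\}$ and is spherically symmetric, it is enough to match one-sided radial derivatives up to order $p$ across the smooth sphere $\{|x|=R\}$. Reusing the auxiliary sequence $h_0:=\sum_{i=0}^{p}\alpha_i\psi_i$, $h_{j+1}:=-\tfrac1r h_j'$, a short induction with $\psi_{k+1}=-\tfrac1r\psi_k'$ gives $h_j=\sum_{i=0}^{p}\alpha_i\psi_{i+j}$, so that
\[
h_j(R)=\sum_{i=0}^{p}\alpha_i\psi_{i+j}(R)=R^{-2j}\sum_{i=0}^{p}\chi_{i+j}(R)\,\tilde\alpha_i=R^{-2j}\delta_{0,j},
\]
the last equality being exactly the assumed linear system; thus $h_0(R)=1$ and $h_1(R)=\dots=h_p(R)=0$. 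Now Lemma~\ref{lemma:RecursiveSequence}, applied with $g_j=h_j$ and $g_0=h$, expresses each $h_j(R)$ as a linear combination of $h^{(1)}(R),\dots,h^{(j)}(R)$ in which the coefficient of $h^{(j)}(R)$ is $(-1)^{j}d_j^{j}R^{-j}=(-1)^{j}R^{-j}\neq 0$, because $d_j^j=1$. Inverting this triangular relation for $j=1,\dots,p$ in turn --- at stage $j$ the lower-order terms $h^{(1)}(R),\dots,h^{(j-1)}(R)$ have already been shown to vanish --- yields $h^{(j)}(R)=0$ for $1\le j\le p$, while on the inside $h\equiv 1$ has $h(R)=1$ and all derivatives zero. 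The two pieces and their radial derivatives therefore agree to order $p$ at $r=R$, making $h$ a $p$-times (continuously) differentiable function on $\R^n$; this is (d). Since the potential function of $B^n_R$ is the unique function with properties (a)--(d), the $h$ we have built is that potential function.

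The main obstacle is the differentiability step. First, Lemma~\ref{lemma:RecursiveSequence} runs in the direction opposite to what is wanted --- it writes the auxiliary functions $h_j$ in terms of the genuine derivatives $h^{(k)}$, whereas the boundary data naturally lives on the $h_j$ --- so the argument must invert a triangular system, which is precisely where the nonvanishing $d_j^j=1$ is used. Second, the ambient definition of ``potential function'' imposes only $h\equiv 1$ on $X$ and $(I-\Delta)^{p+1}h=0$ off $X$, so the decay and smoothness conditions (c) and (d), together with the uniqueness statement that pins $h$ down among functions satisfying them, are what promote ``$h$ has (a)--(d)'' to ``$h$ is \emph{the} potential function'' rather than merely a solution; equivalently, one invokes invertibility of the Hankel matrix $[\chi_{i+j}(R)]_{i,j=0}^{p}$, established in~\cite{Willerton:Hankel}.
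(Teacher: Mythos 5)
Your proposal is correct and uses essentially the same machinery as the paper: the auxiliary sequence $h_{j+1}=-\tfrac1r h_j'$, the identity $h_j=\sum_i\alpha_i\psi_{i+j}$, Lemma~\ref{lemma:RecursiveSequence}, and the rescaling $\alpha_i=e^{R}R^{2i}\tilde\alpha_i$. The only difference is one of logical direction: the paper starts from the potential function's boundary conditions $h^{(1)}(R)=\dots=h^{(p)}(R)=0$ and reads Lemma~\ref{lemma:RecursiveSequence} forwards to deduce $h_j(R)=0$ and hence the Hankel system, whereas you start from a solution of the system and invert the triangular relation (using $d_j^j=1$) to recover the vanishing derivatives, then close the argument with uniqueness of the potential function --- a legitimate and slightly more careful reading of the ``if'' in the statement.
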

\noindent Note that $\tilde\alpha_i$ will be a rational function of $R$, for $i=0,\dots,p$.

\section{Calculating the magnitude}
In this section we will get a couple of expressions for the magnitude of odd balls by using the expression in the last section.  First, we use an expression for the magnitude in terms of an integral of the potential function due to Leinster and Meckes.  This gives a linear expression for the magnitude in terms of the solution set to the linear system in Theorem~\ref{thm:HankelLinearSystem}.  We can add this linear expression to the linear system, thus giving the magnitude as an unknown in a linear system: an application of Cramer's Rule gives an formula for the magnitude as a ratio of determinants.  We then use this to calculate some examples.
\subsection{Magnitude formula using the Leinster-Meckes expression}

The goal of this subsection is prove the following theorem.
\begin{thm} 
\label{thm:MagExplicit}
For $n=2p+1$, 
 we have the following expression for the magnitude of the $n$-dimensional ball:
\begin{align*}
  \left|B^n_R\right|
  &= 
  \frac{1}{n!}\left\{R^n +n\sum_{i=0}^{p} \tilde\alpha_i   
  \sum_{j=0}^{p-i} \frac{2^j (p-i)!}{(p-i-j)!}R^{2(p-j)-1}\chi_{i+j+1}(R)\right\},
  \end{align*}  
where $\{\tilde\alpha_i\}_{i=0}^p$ is the set of solutions to the linear system in Theorem~\ref{thm:HankelLinearSystem}.
\end{thm}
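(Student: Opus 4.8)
The plan is to feed the explicit radial form of the potential function $h$ from the corollary to Theorem~\ref{thm:HankelLinearSystem} into the Leinster--Meckes formula~\eqref{eq:LMmag} and reduce the resulting integral to a single family of one-dimensional integrals that can be evaluated in closed form. Since $h\equiv 1$ on the ball and is a finite linear combination of the exponentially decaying functions $\psi_i$ outside it, $h$ is integrable, so~\eqref{eq:LMmag} applies. Passing to spherical coordinates and splitting the integral at $r=R$: the inner part contributes $\tfrac{1}{n!\,\omega_n}\operatorname{vol}(B^n_R)=\tfrac{R^n}{n!}$, and, since the surface area of the unit sphere is $n\,\omega_n$ and $n-1=2p$, the outer part contributes $\tfrac{n}{n!}\int_R^\infty h(r)\,r^{2p}\,\dd r$. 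Substituting $h(r)=\sum_{i=0}^p\alpha_i\psi_i(r)$ for $r\ge R$, everything comes down to evaluating $I_i:=\int_R^\infty\psi_i(r)\,r^{2p}\,\dd r$ for $0\le i\le p$.

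The crucial ingredient is a closed form for $I_i$. Multiplying the identity of Theorem~\ref{thm:psiDiffEqn} by $r^{2i}$ gives $(r^{2i}\psi_i')'=r^{2i}\psi_i$, and combining this with the defining relation $\psi_i'=-r\psi_{i+1}$ shows that $-r^{2i+1}\psi_{i+1}(r)$ is an antiderivative of $r^{2i}\psi_i(r)$. Writing $r^{2p}\psi_i=r^{2(p-i)}\cdot r^{2i}\psi_i$ and integrating by parts against this antiderivative, using that $r^{2p+1}\psi_{i+1}(r)\to0$ as $r\to\infty$, yields the recursion
\[
I_i = R^{2p+1}\,\psi_{i+1}(R) + 2(p-i)\,I_{i+1},
\]
valid for $0\le i\le p$, with the $i=p$ case serving as the base since then the second term vanishes. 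Unwinding it gives
\[
I_i = \sum_{j=0}^{p-i}\frac{2^j\,(p-i)!}{(p-i-j)!}\,R^{2p+1}\,\psi_{i+j+1}(R).
\]

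To finish I would substitute $\psi_{i+j+1}(R)=e^{-R}R^{-2(i+j+1)}\chi_{i+j+1}(R)$ and $\alpha_i=e^{R}R^{2i}\tilde\alpha_i$, whereupon the factors $e^{\pm R}$ cancel and the powers of $R$ collapse, giving
\[
\alpha_i I_i = \tilde\alpha_i\sum_{j=0}^{p-i}\frac{2^j\,(p-i)!}{(p-i-j)!}\,R^{2(p-j)-1}\,\chi_{i+j+1}(R).
\]
Summing over $i$ and adding $R^n/n!$ produces exactly the stated formula.

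The only real obstacle is the evaluation of $I_i$: the key realisation is that Theorem~\ref{thm:psiDiffEqn} hands over an antiderivative of $r^{2i}\psi_i$ directly, after which a single integration by parts sets up the recursion. The remaining work---convergence of all the integrals (immediate from exponential decay) and the bookkeeping of the powers of $R$ and of $e^{\pm R}$ when converting between the $\psi_i,\alpha_i$ and the $\chi_i,\tilde\alpha_i$---is routine.
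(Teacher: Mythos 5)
Your proposal is correct and follows essentially the same route as the paper: apply the Leinster--Meckes integral formula, split at $r=R$, and evaluate the outer integral via the antiderivative $-r^{2i+1}\psi_{i+1}$ of $r^{2i}\psi_i$ and an integration-by-parts recursion. Your closed form for $I_i$ is exactly the paper's Lemma~\ref{lemma:integral} (with $b=p-i$) rewritten in terms of the $\psi_i$ rather than the $\chi_i$, and the recursion you unwind is the same one the paper proves by induction on the exponent.
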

The idea is to combine the expression for the potential function $h$ in Theorem~\ref{thm:HankelLinearSystem} with the following theorem of Leinster and Meckes.
\begin{thm}[Leinster-Meckes~\cite{LeinsterMeckes:Survey}]
\label{thm:LeinsterMeckes}
Let $K \subset \R^n$ be a convex body with $n$ odd. If the potential function $h$ is
integrable then the magnitude of $K$ can be obtained by integrating the potential function $h$:
\[|K| = \frac{1}{n!\,\omega_n}\int_{\R^n} h(x)\,\mathrm{d}x.\]
\qed
\end{thm}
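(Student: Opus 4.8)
The plan is to take Meckes' Fourier-theoretic formula for the magnitude, recalled earlier in the excerpt as $|K| = \frac{1}{n!\,\omega_n}\int_{\R^n}(1+\lVert x\rVert^2)^{p+1}\lvert\widehat h(x)\rvert^2\,\dd x$, and to transport it back to physical space. The crucial observation is that the weight $(1+\lVert x\rVert^2)^{p+1}$ is precisely the Fourier multiplier of the operator $(I-\Delta)^{p+1}$, so that $(1+\lVert x\rVert^2)^{p+1}\widehat h = \widehat{(I-\Delta)^{p+1}h}$. Writing the integrand as $\widehat{(I-\Delta)^{p+1}h}\cdot\overline{\widehat h}$ and applying Parseval's identity (using that $h$, and hence $S\coloneqq (I-\Delta)^{p+1}h$, is real-valued) turns the weighted Fourier integral into the physical-space pairing $\langle S, h\rangle$.

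First I would make sense of $S$ as a compactly supported tempered distribution. Since $h$ is the potential function, $S$ vanishes on the open exterior $\R^n\setminus K$, so $\mathrm{supp}(S)\subseteq K$; on the interior of $K$ one has $h\equiv 1$ and hence $S\equiv 1$ classically. Across $\partial K$ the function $h$ is only $p$-times differentiable, so $S$ carries a singular part there. Tracking the jumps shows that the lowest-order derivative of $h$ that can jump is the $(p+1)$-st, and that $(I-\Delta)^{p+1}$, of top order $2p+2$, therefore produces boundary terms involving only $\delta_{\partial K}, \partial_\nu\delta_{\partial K},\dots,\partial_\nu^{p}\delta_{\partial K}$; that is, $S$ has order at most $p$. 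This bound on the order is exactly what the $p$-fold differentiability of $h$ buys, and it is the technical heart of the argument.

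Next I would localise and simplify the pairing. Because $\mathrm{supp}(S)\subseteq K$, because $h\equiv 1$ on $K$, and because $S$ tests $h$ only through its value and normal derivatives up to order $p$ on $\partial K$ --- all of which agree with those of the constant function $1$, since $h(R)=1$ and $h^{(1)}(R)=\dots=h^{(p)}(R)=0$ are exactly the conditions~\eqref{eqn:naiveboundary} --- we may replace $h$ by $1$ without changing the pairing, so $\langle S, h\rangle = \langle S, 1\rangle$. Pairing the compactly supported distribution $S$ with the constant $1$ recovers the value of its (smooth) Fourier transform at the origin, $\langle S, 1\rangle = \widehat S(0) = (1+0)^{p+1}\widehat h(0) = \widehat h(0)$, and the integrability hypothesis on $h$ makes the identity $\widehat h(0) = \int_{\R^n} h\,\dd x$ legitimate and finite. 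Assembling the chain, the weighted Fourier integral equals $\int_{\R^n} h\,\dd x$, which is the claimed formula, the constant $\tfrac{1}{n!\,\omega_n}$ surviving unchanged.

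The step I expect to be the main obstacle is the rigorous distributional Parseval identity under the stated hypotheses: $h$ is assumed only integrable and $p$-times differentiable, so $\widehat h$ need not decay and $S$ is genuinely a distribution rather than an $L^2$ function. Two points need care: justifying $\langle \widehat S, \widehat h\rangle = \langle S, h\rangle$ despite $S\notin L^2$, which one handles by exploiting that $S$ is compactly supported of finite order $p$ while $h\in C^p$; and verifying that the boundary part of $S$ really has order at most $p$, for which the regularity of $\partial K$ (smooth, as for the ball) together with a careful bookkeeping of the jump relations is required. Once Meckes' Fourier normalisation is fixed, matching the overall constant is then routine.
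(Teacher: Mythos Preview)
The paper does not prove this theorem: it is quoted from Leinster and Meckes' survey and marked with a \qed\ immediately after the statement, with no argument given. There is therefore nothing in the paper to compare your attempt against.

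That said, your sketch is along the correct lines and is essentially how the result is proved in the literature: one recognises $(I-\Delta)^{p+1}h$ as (a constant times) the weight distribution $w$, uses that it is supported on $K$ and has order at most $p$, and then exploits the $p$-th order agreement of $h$ with the constant $1$ across $\partial K$ to replace $\langle w,h\rangle$ by $\langle w,1\rangle=\widehat w(0)$. You have also correctly flagged the genuine analytic issue, namely making the Parseval/duality step rigorous when $h$ is merely integrable and $w$ is a distribution of positive order. One small caution: your jump-relation bookkeeping tacitly assumes a smooth boundary (which is fine for the ball, the only case used downstream in this paper), whereas the theorem as stated is for a general convex body; in the general case one needs the Sobolev-space framework of Meckes rather than classical jump formulae.
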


To prove Theorem~\ref{thm:MagExplicit} we will need a lemma.
\begin{lemma}
\label{lemma:integral}
For $i$ and $b$ non-negative integers,  $R>0$ 
, we have
  \[
  \int_R^\infty e^{-r}\chi_i(r) r^{2b} \,\mathrm{d}r 
  = 
  e^{-R}\sum_{j=0}^b \frac{2^j b!}{(b-j)!} R^{2(b-j)-1} \chi_{i+j+1}(R).
  \]
\end{lemma}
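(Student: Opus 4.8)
The plan is to prove this by induction on $b$, exploiting the fact that $\chi_i(r) = e^r r^{2i}\psi_i(r)$ and that the $\psi_i$ satisfy the ladder relation $\psi_{i+1} = -\tfrac{1}{r}\psi_i'$, equivalently $\psi_i'(r) = -r\,\psi_{i+1}(r)$. Rewriting the integrand, $e^{-r}\chi_i(r)r^{2b} = r^{2(i+b)}\psi_i(r)$, so the claim is
\[
\int_R^\infty r^{2(i+b)}\psi_i(r)\,\dd r = e^{-R}\sum_{j=0}^b \frac{2^j b!}{(b-j)!} R^{2(b-j)-1}\chi_{i+j+1}(R).
\]
First I would dispose of the base case $b=0$: here we must show $\int_R^\infty r^{2i}\psi_i(r)\,\dd r = e^{-R}R^{-1}\chi_{i+1}(R)$. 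Since $r^{2i}\psi_i(r) = -r^{2i-1}\cdot r\psi_i'(r)/1$... more directly, note $r^{2i}\psi_i(r) = r^{2i+1}\psi_{i+1}(r) \cdot$ — wait; the cleanest route is: $\frac{\dd}{\dd r}\bigl(r^{2i}\psi_i(r)\bigr)$ is not quite a multiple of $\psi_{i+1}$, so instead observe that $\psi_{i+1}(r) = -\tfrac1r\psi_i'(r)$ gives $r\psi_{i+1}(r) = -\psi_i'(r)$, hence $\int_R^\infty r^{2i}\cdot r\cdot r\psi_{i+1}(r)\,\dd r$ relates successive terms; to keep the base case self-contained I would instead integrate by parts directly, writing $r^{2i}\psi_i(r)$ and using that $\psi_i$ decays at infinity together with $\psi_i(R) = e^{-R}R^{-2i}\chi_i(R)$. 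A short computation then yields the base case, since $e^{-R}R^{-1}\chi_{i+1}(R) = R^{2i+1}\psi_{i+1}(R)$.

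For the inductive step, assuming the formula for $b$, I would integrate $\int_R^\infty r^{2(i+b+1)}\psi_i(r)\,\dd r$ by parts, splitting off one factor related to $\psi_i' = -r\psi_{i+1}$. Concretely, since $r^{2(i+b)+1}\psi_{i+1}(r) = -r^{2(i+b)}\psi_i'(r)$, integration by parts on $\int_R^\infty r^{2(i+b+1)}\psi_i(r)\,\dd r$ — treating $\psi_i(r)\,\dd r$... the better grouping is to write $r^{2(i+b+1)}\psi_i(r) = -r^{2(b+1)}\cdot r^{2i-1}\cdot\bigl(\text{something}\bigr)$; rather, I would use the antiderivative relation in the form $\frac{\dd}{\dd r}$ applied to $r^{2i+2b+1}\psi_{i+1}$ type expressions, or simply integrate by parts with $u = r^{2b+2}$ and $\dd v = r^{2i}\psi_i(r)\,\dd r$ whose antiderivative, by the base-case identity applied with a free upper limit, is essentially $-r^{2i+1}\psi_{i+1}(r)$ up to boundary terms. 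This produces a boundary term at $R$ of the form $R^{2b+2}\cdot R^{2i+1}\psi_{i+1}(R) = e^{-R}R^{2b+1}\chi_{i+1}(R)$ and a remaining integral $(2b+2)\int_R^\infty r^{2b+1}\cdot r^{2i+1}\psi_{i+1}(r)\,\dd r = (2b+2)\int_R^\infty r^{2(i+1+b)}\psi_{i+1}(r)\,\dd r$, to which the inductive hypothesis applies with $i\mapsto i+1$. Collecting the two pieces and reindexing the sum (the $j=0$ term coming from the boundary term, the $j\ge 1$ terms from the rescaled inductive sum with $2b\cdot\frac{2^{j-1}b!}{(b-j+1)!}$ combining into $\frac{2^j(b+1)!}{(b+1-j)!}$) gives exactly the claimed formula for $b+1$.

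The main obstacle I anticipate is purely bookkeeping: getting the integration-by-parts antiderivative of $r^{2i}\psi_i(r)$ stated cleanly enough that it can be reused at each stage, and then verifying that the binomial-type coefficient recursion $\frac{2^j(b+1)!}{(b+1-j)!} = \frac{2^{j}b!}{(b-j)!} + 2(b+1)\cdot\frac{2^{j-1}b!}{(b-j+1)!}$ holds — this is the identity $\frac{(b+1)!}{(b+1-j)!} = \frac{b!}{(b-j)!} + (b+1)\frac{b!}{(b+1-j)!}$, i.e. $(b+1)!/(b+1-j)! - b!/(b-j)! = b\cdot b!/(b+1-j)!\cdot\frac{b+1-j}{b}$... so I would double-check this arithmetic carefully, but it is elementary. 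No genuinely hard analytic input is needed beyond the decay of $\psi_i$ at infinity (which is immediate from $\psi_i\in e^{-r}\N[\tfrac1r]$) and the ladder relation already established in Section~\ref{sec:Functions}.
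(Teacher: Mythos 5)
Your proposal follows essentially the same route as the paper's proof: the base case rests on the antiderivative identity $\frac{\dd}{\dd r}\bigl(-e^{-r}\chi_{i+1}(r)/r\bigr)=e^{-r}\chi_i(r)$ (equivalently, $-r^{2i+1}\psi_{i+1}(r)$ is an antiderivative of $r^{2i}\psi_i(r)$, via the ladder relation and the recurrence $r^2\psi_{i+2}=\psi_i+(2i+1)\psi_{i+1}$), and the inductive step is integration by parts with $u=r^{2b+2}$ followed by the inductive hypothesis at $i+1$, with the coefficients matching through the pure product $2(b+1)\cdot 2^{j-1}\,b!/(b+1-j)!=2^j\,(b+1)!/(b+1-j)!$ after reindexing. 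One small correction: the additive ``binomial-type recursion'' you flag for double-checking at the end is both false and unneeded --- as you correctly note earlier, the $j=0$ term arises only from the boundary term and the $j\ge 1$ terms only from the reindexed sum, so no two contributions are ever added together.
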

\begin{proof}
  We proceed by induction on $b$.  
  
  Observe first that
\begin{align*}
  \frac{\dd}{\dd r}\left(-\frac{e^{-r}\chi_{i+1}(r)}{r}\right)
  &= 
  \frac{\dd}{\dd r}\left(-\psi_{i+1}(r)r^{2i+1}\right)\\
  &=
  r\psi_{i+2}(r)r^{2i+1}-\psi_{i+1}(r)(2i+1)r^{2i}\\
  &=
  r^{2i}\left(r^2\psi_{i+2}(r)-(2i+1)\psi_{i+1}(r)\right)\\
  &=
  r^{2i}\psi_{i}(r)\\
  &=
  e^{-r}\chi_i(r).
\end{align*} 
So, by the Fundamental Theorem of Calculus,
\[
  \int_{r=R}^\infty e^{-r} \chi_i(r)\,\dd r 
  =
  \left[-\frac{e^{-r}\chi_{i+1}(r)}{r}\right]^\infty_{r=R}
  =
  \tfrac{1}{R}e^{-R}\chi_{i+1}(R),
  \]
as required.  Thus the statement is true for all $i$ with $b=0$.  
  
  Now assume that it is true for $0\le b< c$;  we will prove it when $b=c$.  Start by using integration by parts together with the above lemma.
  \begin{align*}
    \int_R^\infty e^{-r}\chi_i(r) r^{2c} \,\mathrm{d}r 
    &=
    \left[-\frac{e^{-r}\chi_{i+1}(r)}{r} r^{2c}\right]_R^\infty - 
    \int_R^\infty -\frac{e^{-r}\chi_{i+1}(r)}{r} 2c r^{2c-1}\,\dd r\\
    &= 
    e^{-R}\chi_{i+1}(R)R^{2c-1} + 2c\int_R^\infty e^{-r}\chi_{i+1}(r) r^{2(c-1)}\,\dd r\\
    &=
    e^{-R}\chi_{i+1}(R)R^{2c-1} \\
    &{\quad}\quad
    + 2c   e^{-R}\sum_{j=0}^{c-1}  
       \tfrac{2^j(c-1)!}{(c-1-j)!} R^{2(c-1-j)-1} \chi_{i+1+j+1}(R)\\
    &=
    e^{-R}\chi_{i+1}(R)R^{2c-1} 
     +   e^{-R}\sum_{k=1}^{c}\tfrac{ 2^k c!}{(c-k)!} R^{2(c-k)-1} \chi_{i+k+1}(R)\\
    &=
      e^{-R}\sum_{k=0}^c  \tfrac{2^k c!}{(c-k!)}  R^{2(c-k)-1} \chi_{i+k+1}(R) ,
  \end{align*}
as required, where the third equality used the inductive hypothesis and the fourth equality used the substitution $k=j+1$.  The lemma follows by induction.
\end{proof}

\begin{proof}[Proof of Theorem~\ref{thm:MagExplicit}]
We use Theorem~\ref{thm:LeinsterMeckes} with the expression for the potential function given in Theorem~\ref{thm:HankelLinearSystem}.
\begin{align*}
  |B^n_R| 
  &= 
  \frac{1}{n!\,\omega_n}\int_{\R^n} h(x)\,\mathrm{d}x
  =
  \frac{1}{n!\,\omega_n}\int_{B^n_R} 1\,\mathrm{d}x
  +\frac{1}{n!\,\omega_n}\int_{|x|>R} h(x)\,\mathrm{d}x\\
  &=
  \frac{1}{n!\,\omega_n}R^n \omega_n +\frac{1}{n!\,\omega_n}\int_{r>R} h(r)r^{n-1}\sigma_{n-1}\,\mathrm{d}r\\
  &=
  \frac{1}{n!}\left\{R^n +n\int_{r=R}^\infty h(r)r^{n-1}\,\mathrm{d}x\right\}\\
  &=
  \frac{1}{n!}\left\{R^n +n\sum_{i=0}^{p} \tilde\alpha_i e^R R^{2i}\int_{r=R}^\infty e^{-r}\chi_i(r)r^{-2i}r^{2p}\,\mathrm{d}x\right\} 
\end{align*}
Using Lemma~\ref{lemma:integral} above then gives us the theorem.
\end{proof}

\subsection{Determinant formula for the magnitude}
Now we will give a reasonably explicit formula for the magnitude of an odd dimensional ball in terms of a ratio of two determinants.  

Defining, for $i=0,1,2,\dots,p$, the integral polynomial 
\[\tilde \xi_{p,i} := n \sum_{j=0}^{p-i} \frac{2^j(p-i)!}{(p-i-j)!}R^{2(p-j)}\chi_{i+j+1}(R),\]
recalling $n=2p+1$, and rearranging the formula for magnitude in Theorem~\ref{thm:MagExplicit} we find
\[
  -n!\,R  \left|B^n_R\right| + \sum_{i=0}^{p} \tilde \xi_{p,i} \tilde\alpha_i 
  =
  -R^{2p+2}.
\]
We can extend the linear system in Theorem~\ref{thm:HankelLinearSystem} to deduce that the magnitude $\left|B^n_R\right|$ is obtained by solving the following linear system.
\[
\begin{pmatrix}
\chi_0(R)&  \chi_1(R)&\dots&\chi_{p}(R)&0\\
\chi_1(R)& \chi_2(R)& \dots&\chi_{p+1}(R)&0\\
\vdots&\vdots&&\vdots&\vdots\\
\chi_{p}(R)&\chi_{p+1}(R)&\dots&\chi_{2p}(R)&0 \\
\tilde \xi_{p,0}&\tilde \xi_{p,1}&\dots&\tilde \xi_{p,p}& -n!\, R
\end{pmatrix}
\begin{pmatrix}
\tilde\alpha_0\\
\tilde\alpha_1\\
\vdots\\
\tilde \alpha_{p}\\
|B^n_R|
\end{pmatrix}
=
\begin{pmatrix}
1\\
0\\
\vdots\\
0\\
-R^{2p+2}
\end{pmatrix}
\] 
Clearly this can be simplified by adding a multiple of the top row to the bottom row, so defining, for $i=0,1,\dots, p$ the integral polynomial
  \[
  \xi_{p,i}(R) :=  R^{2p+2}\chi_i(R) + \tilde\xi_{p,i}(R) 
  \]
we find that the magnitude is obtained in the solution to the following linear system.
\[
\begin{pmatrix}
\chi_0(R)&  \chi_1(R)&\dots&\chi_{p}(R)&0\\
\chi_1(R)& \chi_2(R)& \dots&\chi_{p+1}(R)&0\\
\vdots&\vdots&&\vdots&\vdots\\
\chi_{p}(R)&\chi_{p+1}(R)&\dots&\chi_{2p}(R)&0 \\
\xi_{p,0}(R)& \xi_{p,1}(R)&\dots& \xi_{p,p}(R)& -n!\, R
\end{pmatrix}
\begin{pmatrix}
\tilde\alpha_0\\
\tilde\alpha_1\\
\vdots\\
\tilde \alpha_{p}\\
|B^n_R|
\end{pmatrix}
=
\begin{pmatrix}
1\\
0\\
\vdots\\
0\\
0
\end{pmatrix}
\] 
A straightforward application of Cramer's Rule then gives the following reasonably explicit form for the magnitude.
\begin{thm}
\label{thm:MagDet}
  The magnitude $|B^{2p+1}_R|$ of the $2p+1$-dimensional, radius $R$ ball, is given by the following expression involving a ratio of determinants.
  \[
    \left|B^{2p+1}_R\right|
    =
    \frac{(-1)^p}{n!\,R}
    \frac{ \left|
    \begin{matrix}
      \chi_1(R)&  \dots&\chi_{p+1}(R)\\
      \vdots&&\vdots\\
      \chi_{p}(R)&\dots&\chi_{2p}(R) \\
      \xi_{p,0}(R)& \dots& \xi_{p,p}(R)   
    \end{matrix}
    \right|}
    {\left|
    \begin{matrix}
      \chi_0(R)&\dots&\chi_{p}(R)\\
      \vdots&&\vdots\\
      \chi_{p}(R)&\dots&\chi_{2p}(R)
    \end{matrix}
    \right|
    }
  \]\qed
\end{thm}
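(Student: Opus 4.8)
The plan is to obtain Theorem~\ref{thm:MagDet} as a direct consequence of Cramer's Rule applied to the final $(p+2)\times(p+2)$ linear system displayed just above the statement. The unknown vector is $(\tilde\alpha_0,\dots,\tilde\alpha_p,|B^n_R|)^{\mathsf T}$, the right-hand side is $(1,0,\dots,0)^{\mathsf T}$, and the coefficient matrix $M$ has the block form
\[
M=\begin{pmatrix} H & \mathbf 0\\ \boldsymbol\xi & -n!\,R\end{pmatrix},
\]
where $H=[\chi_{i+j}(R)]_{i,j=0}^{p}$ is the Hankel matrix of Theorem~\ref{thm:HankelLinearSystem}, $\mathbf 0$ is the zero column of height $p+1$, and $\boldsymbol\xi=(\xi_{p,0}(R),\dots,\xi_{p,p}(R))$ is the bottom row. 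First I would record that $\det M=-n!\,R\,\det H$ by expansion along the last column, which is nonzero as a rational function of $R$ since $\det H$ is (for instance, it is a nonzero polynomial — its top-degree behaviour can be read off, or one simply notes that the linear system has the unique solution giving the genuine potential function).

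Next I would apply Cramer's Rule to the last coordinate: $|B^n_R|=\det M'/\det M$, where $M'$ is $M$ with its last column replaced by the right-hand side $(1,0,\dots,0)^{\mathsf T}$. Thus $M'$ has the form
\[
M'=\begin{pmatrix} H & e_1\\ \boldsymbol\xi & 0\end{pmatrix},
\]
with $e_1=(1,0,\dots,0)^{\mathsf T}$ of height $p+1$. I would evaluate $\det M'$ by cofactor expansion along the last column: the only nonzero entry there is the $1$ in row~$0$, contributing sign $(-1)^{0+(p+1)}=(-1)^{p+1}$ times the minor obtained by deleting row~$0$ and the last column of $M'$. That minor is exactly
\[
\det\begin{pmatrix}
\chi_1(R)&\dots&\chi_{p+1}(R)\\
\vdots&&\vdots\\
\chi_p(R)&\dots&\chi_{2p}(R)\\
\xi_{p,0}(R)&\dots&\xi_{p,p}(R)
\end{pmatrix},
\]
since deleting row~$0$ of $H$ leaves the rows indexed $1,\dots,p$, i.e.\ $[\chi_{i+j}(R)]_{i=1,\dots,p;\,j=0,\dots,p}$, stacked above the $\boldsymbol\xi$ row. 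Dividing by $\det M=-n!\,R\,\det H$ and simplifying the sign $(-1)^{p+1}/(-1)=(-1)^{p}$ (equivalently $(-1)^{p+1}/(-n!\,R)=(-1)^p/(n!\,R)$) yields precisely the claimed formula.

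The only genuine content beyond bookkeeping is justifying that the displayed $(p+2)\times(p+2)$ system really does have $|B^n_R|$ as (the last entry of) its unique solution — but this was already established in the lines preceding the theorem: the Hankel block encodes the boundary conditions of Theorem~\ref{thm:HankelLinearSystem}, the bottom equation is the rearrangement of the magnitude formula of Theorem~\ref{thm:MagExplicit} after folding $R^{2p+2}\chi_i(R)$ into $\xi_{p,i}$ via a row operation, and $\det H\neq 0$ guarantees uniqueness. So there is no real obstacle; the main thing to be careful about is the sign accounting in the Laplace expansions and the fact that all quantities are being manipulated as rational functions of $R$, so that "nonzero determinant" means nonzero in $\mathbb{R}(R)$, which is all Cramer's Rule needs.
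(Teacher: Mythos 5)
Your proposal is correct and follows essentially the same route as the paper, which derives the theorem as "a straightforward application of Cramer's Rule" to the displayed $(p+2)\times(p+2)$ system; your Laplace expansions along the last column and the resulting sign $(-1)^{p+1}/(-1)=(-1)^p$ check out. The only addition beyond the paper's (terse) argument is your explicit remark that $\det H\neq 0$ in $\mathbb{R}(R)$, which is a reasonable point to make but does not change the approach.
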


\subsection{Examples}
It is easy to implement this formula in a dozen or so lines of SageMath.  So for example we have when $n=3$, i.e.~$p=1$,
\begin{align*}
    \left|B^{3}_R\right|
    &=
    \frac{-1}{3!\,R}
    \frac{ \left|
    \begin{matrix}
      \chi_{1}(R) & \chi_{2}(R) \\
      R^{4} \chi_{0}(R) + 3  R^{2} \chi_{1}(R) + 6  \chi_{2}(R) 
      & 
      R^{4} \chi_{1}(R) + 3 R^{2} \chi_{2}(R)
  \end{matrix}\right|}
  {\left|
    \begin{matrix}
      \chi_{0}(R) & \chi_{1}(R) \\
      \chi_{1}(R)& \chi_{2}(R)
  \end{matrix}\right|}\\
  &=
    \frac{-1}{3!\,R}
    \frac{ \left|
    \begin{matrix}
      R & R^{2} + R \\
      R^{4} + 3 R^{3} + 6 R^{2} + 6 R 
      & R^{5} + 3 R^{4} + 3 R^{3}
  \end{matrix}\right|}
  {\left|
    \begin{matrix}
      1 & R \\
      R & R^{2} + R
  \end{matrix}\right|}\\
  &=
    \frac{-1}{3!\,R}
    \frac{- R^{2} (R^{3} + 6 R^{2} + 12 R + 6)}{R}\\
   &=
    \frac{1}{3!}
    (R^{3} + 6 R^{2} + 12 R + 6), 
  \end{align*}
which is exactly as was calculated by Barcelo and Carbery~\cite{BarceloCarbery}.

Similarly, when $n=5$, i.e.~$p=2$ we have
\begin{align*}
    \left|B^{5}_R\right|
    &=
    \frac{1}{5!\,R}
    \frac{
      \left|
    \begin{smallmatrix}
      \chi_{1}(R) & \chi_{2}(R) & \chi_{3}(R) \\
\chi_{2}(R) & \chi_{3}(R) & \chi_{4}(R) \\
R^{6} \chi_{0} + 5 \, R^{4} \chi_{1} + 20 \, R^{2} \chi_{2} + 40 \,
\chi_{3} & R^{6} \chi_{1} + 5 \, R^{4} \chi_{2} + 10 \, R^{2}
\chi_{3} & R^{6} \chi_{2} + 5 \, R^{4} \chi_{3}
  \end{smallmatrix}\right|}
  {\left|
    \begin{matrix}
      \chi_{0}(R) & \chi_{1}(R) &\chi_2(R)\\
      \chi_{1}(R)& \chi_{2}(R)&\chi_3(R)\\
      \chi_{2}(R)& \chi_{3}(R)&\chi_4(R)\\
  \end{matrix}\right|}\\
  &=
  \frac{1}{5!\,R}
    \frac{2 R^{3}  (R^{6} + 18 R^{5} + 135 R^{4} + 525
R^{3} + 1080 R^{2} + 1080 R + 360)}{2R^{2}(R + 3) }\\
&=
  \frac{1}{5!}
    \frac{R^{6} + 18 R^{5} + 135 R^{4} + 525
R^{3} + 1080 R^{2} + 1080 R + 360}{R + 3},
  \end{align*}
which again agrees with the earlier calculation.

\section{Conjecturing the Hankel determinant expression for the magnitude}
In this section we want to see how some empirical (and unexplained) observations about the solution set to the linear system in Theorem~\ref{thm:HankelLinearSystem} lead to a conjecture about a much more symmetric expression for the magnitude of an odd-ball.  This conjecture is proved in \cite{Willerton:Hankel}, but no explanation is given there for the how that expression was guessed.  This is the missing explanation.

First we must embellish the notation a little by indicating on the coefficients of $h$ which dimension we are working in.  So write $\alpha^{[n]}_i$ for the $i$th coefficient of $h$ when we are working with the $n$-dimensional ball $B^{n}_R$. Here are some values of $\tilde\alpha_0^{[n]}$ for small, odd $n$.
\begin{align*}
\tilde\alpha_0^{[1]} &= 1\\
\tilde\alpha_0^{[3]} &=  R + 1 \\
\tilde\alpha_0^{[5]} &= \frac{ R^{3} + 6R^{2} + 12R + 6 }{ 2 !\,( R + 3 )}\\
\tilde\alpha_0^{[7]} &= \frac{ R^{6} + 18R^{5} + 135R^{4} + 525R^{3} + 1080R^{2} + 1080R + 360 }{ 3 !\,( R^{3} + 12 R^{2} + 48 R + 60 )}\\
\tilde\alpha_0^{[9]} &= \frac{ R^{10} + 40R^{9} + 720R^{8} + \dots
+ 1814400R^{2} + 1209600R + 302400 }{ 4 !(\, R^{6} + 30 R^{5} + 375 R^{4} + 2475 R^{3} 
+ 9000 R^{2} + 16920 R + 12600 )}
\end{align*}
You can compare these values with the magnitudes listed in the introduction and arrive at the following which  was also observed independently by Barcelo and Carbery~\cite{BarceloCarbery}.
\begin{observation}
For those $n$ for which the magnitude was calculated, the magnitude of the $n$-ball is a rational function whose numerator is --- up to powers of $R$ --- the same as the zeroth coefficient from the next dimension up:
\[
|B_R^n| \propto \frac{\num\left(\tilde\alpha_0^{[n+2]}\right)}{\den\left(\tilde\alpha_0^{[n]}\right)}.
\]
\end{observation}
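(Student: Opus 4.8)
As stated, the Observation concerns only the dimensions in which $|B^n_R|$ has been computed, so it may be verified directly --- the examples above and the displayed values of $\tilde\alpha_0^{[n]}$ already confirm it for $n\le 7$. The substantive question is whether the proportionality holds for \emph{every} odd $n$; the plan for that is to express $|B^n_R|$, $\tilde\alpha_0^{[n]}$ and $\tilde\alpha_0^{[n+2]}$ as ratios of Hankel determinants of reverse Bessel polynomials, via Cramer's Rule, and to compare.

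First I would apply Cramer's Rule to the linear system of Theorem~\ref{thm:HankelLinearSystem}, in dimension $n=2p+1$ and again in dimension $n+2=2(p+1)+1$. Expanding the modified matrix along its first column --- whose only nonzero entry is the leading~$1$ --- yields
\[
\tilde\alpha_0^{[n]}=\frac{\det[\chi_{i+j+2}]_{i,j=0}^{p-1}}{\det[\chi_{i+j}]_{i,j=0}^{p}},
\qquad
\tilde\alpha_0^{[n+2]}=\frac{\det[\chi_{i+j+2}]_{i,j=0}^{p}}{\det[\chi_{i+j}]_{i,j=0}^{p+1}},
\]
with all $\chi$'s evaluated at $R$ and the empty determinant read as $1$. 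I would then identify the numerator and denominator of each of these ratios after cancelling common monomial factors: the exponent of $R$ in $\det[\chi_{i+j}]_{i,j=0}^{m}$ is $m$, and in $\det[\chi_{i+j+2}]_{i,j=0}^{m}$ it is $m+1$, both of which follow from the lowest-order expansions $\chi_0=1$, $\chi_k=a_k R+O(R^{2})$ with $a_k\neq0$ for $k\ge1$, together with the non-vanishing of the Hankel determinants $\det[a_{i+j}]$ (these being moment determinants).

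The remaining ingredient is the coprimality of the reduced polynomials that appear once these powers of $R$ are stripped off; this can be read off from the path-counting description of these Hankel determinants in~\cite{Willerton:Hankel}. A convenient further tool, which also couples all the determinants in play, is the Desnanot--Jacobi (Dodgson condensation) identity for the $(p+2)\times(p+2)$ Hankel matrix $[\chi_{i+j}]_{i,j=0}^{p+1}$,
\[
\begin{aligned}
\det[\chi_{i+j}]_{i,j=0}^{p+1}\cdot\det[\chi_{i+j+2}]_{i,j=0}^{p-1}
={}&\det[\chi_{i+j+2}]_{i,j=0}^{p}\cdot\det[\chi_{i+j}]_{i,j=0}^{p}\\
&{}-\bigl(\det[\chi_{i+j+1}]_{i,j=0}^{p}\bigr)^{2}
\end{aligned}
\]
which is, incidentally, the identity behind the conjectural derivative formula of the introduction. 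The outcome of all this is $\num\left(\tilde\alpha_0^{[n+2]}\right)\propto\det[\chi_{i+j+2}]_{i,j=0}^{p}$ and $\den\left(\tilde\alpha_0^{[n]}\right)\propto\det[\chi_{i+j}]_{i,j=0}^{p}$, where $\propto$ means equality up to a scalar and a power of $R$.

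Since Theorem~\ref{thm:MagDet} already presents $|B^n_R|$ with denominator $\det[\chi_{i+j}]_{i,j=0}^{p}$, the Observation is now reduced to one determinantal identity: the numerator determinant of Theorem~\ref{thm:MagDet} should equal $\det[\chi_{i+j+2}]_{i,j=0}^{p}$ up to a scalar and a power of $R$. This is precisely the assertion that Theorem~\ref{thm:MagDet} and Formula~\ref{conj:HankelFormula} agree, and it is the main obstacle: it is the step I have been unable to carry out, and a proof of it would at the same time establish Formula~\ref{conj:HankelFormula} by the methods of this paper. The lines of attack I would try are to substitute the explicit expansion of $\xi_{p,i}$ into the bottom row of that determinant and reduce it modulo the $\chi$-rows using the recurrence $\chi_{i+2}(R)=R^{2}\chi_i(R)+(2i+1)\chi_{i+1}(R)$; or to use that $\xi_{p,i}(R)$ equals, up to the monomial factor $nR^{2i+1}e^{R}$, the tail integral $\int_R^{\infty}r^{n-1}\psi_i(r)\,\dd r$ (by Lemma~\ref{lemma:integral}) and look for an integral-theoretic proof of the identity; or to apply Dodgson condensation to the bordered matrix directly. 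If none of these succeeds, one can instead invoke Formula~\ref{conj:HankelFormula} from~\cite{Willerton:Hankel} in place of Theorem~\ref{thm:MagDet}, after which the first two steps alone complete the proof of the Observation.
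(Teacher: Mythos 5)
Your first paragraph is exactly the paper's own justification: the statement is labelled an empirical observation restricted to the dimensions already computed, and the paper establishes it simply by comparing the displayed values of $\tilde\alpha_0^{[n]}$ with the magnitudes listed in the introduction. The remainder of your proposal goes beyond the paper but is sound in outline --- the Cramer's Rule expressions and the Desnanot--Jacobi identity are correctly stated, and you rightly locate the one missing step for a general proof in the determinantal identity equating Theorem~\ref{thm:MagDet} with Formula~\ref{conj:HankelFormula}, which is precisely the step the paper admits it cannot carry out and resolves only via the independent weight-distribution argument of~\cite{Willerton:Hankel}.
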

Due to the simple form of the right hand side of the linear system in Theorem~\ref{thm:HankelLinearSystem}, Cramer's Rule immediately gives us the following easy to write down expression for $\tilde\alpha_0^{[n]}$.
\[
\tilde\alpha_0^{[n]}
  =
  \frac{\det
  [\chi_{i+j}(R)]_{i,j=1}^{p}
  }
  {\det
  [\chi_{i+j}(R)]_{i,j=0}^{p}
  }.
\]
Using these observations together with some computations to fix the constants of proportionality, one is led to conjecture the following.
\begin{formula}
\label{conj:HankelFormula}
For $n=2p+1$ the magnitude of the $n$-dimensional ball of radius~$R$ has the following form:
  \[|B_R^n|=
  \frac{
\left|  \begin{matrix}
\chi_2(R)&  \chi_3(R)&\dots&\chi_{p+2}(R)\\
\chi_3(R)& \chi_4(R)& \dots&\chi_{p+3}(R)\\
\vdots&\vdots&&\vdots\\
\chi_{p+2}(R)&\chi_{p+3}(R)&\dots&\chi_{2p+2}(R) \\
\end{matrix}
\right|
  }
  {n!\,R\,
 \left|
  \begin{matrix}
\chi_0(R)&  \chi_1(R)&\dots&\chi_{p}(R)\\
\chi_1(R)& \chi_2(R)& \dots&\chi_{p+1}(R)\\
\vdots&\vdots&&\vdots\\
\chi_{p}(R)&\chi_{p+1}(R)&\dots&\chi_{2p}(R) \\
\end{matrix}
 \right|
  }.
\]
\end{formula}
In the presence of Theorem~\ref{thm:MagDet} proving the formula is equivalent to proving the following determinantal identity:
\[
(-1)^p\left|
    \begin{matrix}
      \chi_1(R)&  \dots&\chi_{p+1}(R)\\
      \vdots&&\vdots\\
      \chi_{p}(R)&\dots&\chi_{2p}(R) \\
      \xi_{p,0}(R)& \dots& \xi_{p,p}(R)   
    \end{matrix}
    \right|
    =
    \left|
    \begin{matrix}
      \chi_2(R)&  \dots&\chi_{p+2}(R)\\
      \vdots&&\vdots\\
      \chi_{p+1}(R)&\dots&\chi_{2p+1}(R) \\
      \chi_{2p+2}(R)& \dots& \chi_{2p+2}(R)   
    \end{matrix}
    \right|.
\]
Unfortunately I do not know how to give a direct proof of this identity.   Fortunately, I do know how to prove the formula using a rather different approach, using the weight distribution for the ball rather than the potential function. This is done in~\cite{Willerton:Hankel}, however, using that approach it is not clear why one might guess such a formula for the magnitude.

\section{Conjecturing the derivative of the magnitude}
In the previous section we saw how some observations about the lowest coefficient $\tilde \alpha_ 0 ^{[ n +2]}$ lead to a conjecture about expressing the magnitude of $B^n_R$ in terms of Hankel determinants.  In this final section we see how some observations about the highest coefficient $\tilde \alpha_ {p+1} ^{[ n +2]}$ leads to a conjecture about expressing the \emph{derivative} of the magnitude of $B^n_R$ in terms of Hankel determinants.

We can calculate $\tilde\alpha_p^{[n]}$ for some for small $n$.
\begin{align*}
\tilde\alpha_ 0 ^{[ 1 ]} &= 1\\
\tilde\alpha_ 1 ^{[ 3 ]} &=  -1 \\
\tilde\alpha_ 2 ^{[ 5 ]} &= \frac{ R + 2 }{ 2 !\,( R + 3 )}\\
\tilde\alpha_ 3 ^{[ 7 ]} &= \frac{ -(R^{3} +9R^{2} + 27R + 24) }{ 3 !\,( R^{3} + 12 R^{2} + 48 R + 60 )}\\
\tilde\alpha_ 4 ^{[ 9 ]} &= \frac{ R^{6} + 24R^{5} + 240R^{4} + 1260R^{3} + 3600R^{2} + 5220R + 2880 }{ 4 !\,( R^{6} + 30 R^{5} + 375 R^{4} + 2475 R^{3} + 9000 R^{2} + 16920 R + 12600 )}
\end{align*}

In a seemingly unrelated direction, we know that $\dr{i}h^{[n]}(R)=0$ for $i=1,\dots,p$, and the derivative $\dr{p+1}h^{[n]}$ is discontinuous at $r=R$.  The limit from below at this discontinuity will be $0$ as $h^{[n]}(r)=1$ for $r\le R$ and we can calculate the limit from above for small values of $n$.
\begin{align*}
\lim_{r\downarrow R}\frac{\mathrm{d}  h^{[ 1 ]}}{\mathrm{d}r } &=-1\\
\lim_{r\downarrow R}\frac{\mathrm{d}^ 2  h^{[ 3 ]}}{\mathrm{d}r^ 2 } &= \frac{-( R + 2 )}{R}\\
\lim_{r\downarrow R}\frac{\mathrm{d}^ 3  h^{[ 5 ]}}{\mathrm{d}r^ 3 } &= \frac{-( R^{3} + 9R^{2} + 27R + 24 )}{R^ 2 \,( R + 3 )}\\
\lim_{r\downarrow R}\frac{\mathrm{d}^ 4  h^{[ 7 ]}}{\mathrm{d}r^ 4 } &= \frac{-( R^{6} + 24R^{5} + 240R^{4} + 1260R^{3} + 3600R^{2} + 5220R + 2880 )}{R^ 3 \,( R^{3} + 12R^{2} + 48R + 60 )}
\end{align*}
Again, we see the phenomenon that denominator of something in dimension~$n$ is essentially the denominator of one of the coefficients in dimension~$n+2$.  I have no explanation for this.  But it does lead to some explicit formulas.
  As before due to the simple form of the right hand side of the linear system in Theorem~\ref{thm:HankelLinearSystem}, Cramer's Rule immediately gives us the following easy to write down expression for $\tilde\alpha_0^{[n]}$.
\[\tilde\alpha_p^{[n]}=(-1)^p\frac{\det
[\chi_{i+j+1}(R)]_{i,j=0}^{p-1}
}
{\det
[\chi_{i+j}(R)]_{i,j=0}^{p}
}.\]
With that in mind one can then formulate and prove the following theorem.
\begin{thm}For $n=2p+1$ we have the following expression for the first non-trivial derivative of the potential function at the boundary of the ball:
\[\lim_{r\downarrow R}\dr{p+1}h^{[n]}=
-\frac{\det
[\chi_{i+j+1}(R)]_{i,j=0}^{p}
}
{R^{p+1}\det
[\chi_{i+j}(R)]_{i,j=0}^{p}
}.\]
\end{thm}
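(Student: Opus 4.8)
The plan is to reduce the one-sided limit $\lim_{r\downarrow R}\dr{p+1}h^{[n]}$ to an evaluation of the auxiliary function $h_{p+1}$ at the boundary, and then to compute that evaluation by Cramer's Rule applied to a one-row enlargement of the Hankel linear system of Theorem~\ref{thm:HankelLinearSystem}.

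\emph{From $h^{(p+1)}$ to $h_{p+1}(R)$.} On $(0,\infty)$ the function $\sum_{i=0}^p\alpha_i\psi_i$ is smooth, since each $\psi_i$ is, so $\lim_{r\downarrow R}\dr{p+1}h^{[n]}$ is exactly the value at $r=R$ of the $(p+1)$st derivative of this branch. Applying Lemma~\ref{lemma:RecursiveSequence} to the sequence $(h_j)_{j\ge0}$ with $g_0=\sum_i\alpha_i\psi_i$ writes $h_{p+1}(r)$ as $\sum_{k=1}^{p+1}(-1)^k d_k^{p+1}\,r^{-(2(p+1)-k)}\bigl(\sum_i\alpha_i\psi_i\bigr)^{(k)}(r)$; evaluating at $r=R$ and invoking the boundary conditions \eqref{eqn:naiveboundary} kills every term except $k=p+1$, and since $d_{p+1}^{p+1}=1$ (from the explicit formula, or from the recursion which forces $d_{j+1}^{j+1}=d_j^j$ with $d_1^1=1$), one gets $\lim_{r\downarrow R}\dr{p+1}h^{[n]}=(-1)^{p+1}R^{p+1}\,h_{p+1}(R)$.

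\emph{Rewriting in reverse Bessel polynomials, then Cramer's Rule.} Since $h_j(r)=\sum_{i=0}^p\alpha_i\psi_{i+j}(r)$, the substitutions $\alpha_i=e^R R^{2i}\tilde\alpha_i$ and $\psi_m(R)=e^{-R}R^{-2m}\chi_m(R)$ collapse $h_{p+1}(R)$ to $R^{-(2p+2)}\sum_{i=0}^p\tilde\alpha_i\chi_{i+p+1}(R)$, so everything reduces to identifying $\Sigma:=\sum_{i=0}^p\tilde\alpha_i\chi_{i+p+1}(R)$. To do this I would adjoin to the system of Theorem~\ref{thm:HankelLinearSystem} the equation $\sum_{i=0}^p\chi_{i+p+1}(R)\tilde\alpha_i-\Sigma=0$, treating $\Sigma$ as a new unknown; the resulting $(p+2)\times(p+2)$ matrix has last column $(0,\dots,0,-1)^{\top}$ and right-hand side $(1,0,\dots,0)^{\top}$, so cofactor expansion along the last column gives denominator $-\det[\chi_{i+j}(R)]_{i,j=0}^p$ and numerator $(-1)^{p+1}\det[\chi_{i+j+1}(R)]_{i,j=0}^p$, whence $\Sigma=(-1)^p\det[\chi_{i+j+1}(R)]_{i,j=0}^p\big/\det[\chi_{i+j}(R)]_{i,j=0}^p$. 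Combining with the two identities above, the signs collapse as $(-1)^{p+1}(-1)^p=-1$ and the powers of $R$ as $R^{p+1}R^{-(2p+2)}=R^{-(p+1)}$, producing the asserted formula.

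\emph{Where the work is.} There is no deep obstacle here, in contrast with Formula~\ref{conj:HankelFormula}: the proof is a chain of elementary manipulations already assembled in the previous sections. The place most likely to cause a slip — and the step I would write out most carefully — is the sign bookkeeping in the two cofactor expansions of the enlarged system; the only other points needing a word are the remark that the one-sided limit picks out the smooth branch $\sum_i\alpha_i\psi_i$ rather than the constant branch, and the evaluation $d_{p+1}^{p+1}=1$. An equivalent shortcut for $\Sigma$ is to quote the already-derived expression $\tilde\alpha_p^{[n]}=(-1)^p\det[\chi_{i+j+1}]_{i,j=0}^{p-1}/\det[\chi_{i+j}]_{i,j=0}^p$ together with one Hankel-determinant row expansion, but the enlarged-system computation is the cleanest and is the one I would present.
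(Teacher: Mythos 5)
Your proposal is correct and follows essentially the same route as the paper's own proof: Lemma~\ref{lemma:RecursiveSequence} together with the vanishing boundary derivatives and $d_{p+1}^{p+1}=1$ reduces the one-sided limit to $(-1)^{p+1}R^{p+1}h_{p+1}(R)$, which is then evaluated via the $\psi\to\chi$ rescaling and Cramer's Rule on a one-row, one-column enlargement of the Hankel system of Theorem~\ref{thm:HankelLinearSystem}. The only cosmetic difference is that you adjoin $\Sigma=\sum_i\tilde\alpha_i\chi_{i+p+1}(R)$ as the extra unknown and rescale afterwards, whereas the paper adjoins $\dr{p+1}h_0(R)$ directly with last-column entry $(-1)^pR^{p+1}$.
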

\begin{proof}
Take $h_0(r):=\sum_{i=0}^{p}\alpha^{[n]}_i \psi_i(r)$ so that 
\[\lim_{r\downarrow R}\dr{p+1}h^{[n]}=\dr{p+1}h_0(R).\]  Define the sequence $(h_i)_{i=0}^\infty$ as above.
By Lemma~\ref{lemma:RecursiveSequence}, the fact that $\dr{k}h_0(R)=0$ for $k=0, \dots, p$ and the fact that $d_{p+1}^{p+1}=1$ we have 
\[
  h_{p+1}(R)=
  \sum_{k=1}^{p+1}\frac{(-1)^k d_k^{p+1} \dr{k}h_0(R)}{R^{2(p+1)-k}}
=
  \frac{(-1)^{p+1} \dr{p+1}h_0(R)}{R^{p+1}}.
\]
On the other hand, \[h_{p+1}(R) = \sum_{i=0}^{p}\alpha^{[n]}_i \psi_{p+1+i}(R)=R^{-2p-2}
\sum_{i=0}^{p}\tilde\alpha^{[n]}_i \chi_{p+1+i}(R).\]
Equating these two expression gives
\[
  (-1)^{p+1} R^{p+1}\dr{p+1}h_0(R) = \sum_{i=0}^{p}\tilde\alpha^{[n]}_i \chi_{p+1+i}(R).\]
Moving the left hand side to the right hand side and appending the resulting equation to the linear system in Theorem~\ref{thm:HankelLinearSystem} gives
\[
\begin{pmatrix}
\chi_0(R)&  \chi_1(R)&\dots&\chi_{p}(R)&0\\
\chi_1(R)& \chi_2(R)& \dots&\chi_{p+1}(R)&0\\
\vdots&\vdots&&\vdots\\
\chi_{p}(R)&\chi_{p+1}(R)&\dots&\chi_{2p}(R)&0 \\
\chi_{p+1}(R)&\chi_{p+2}(R)&\dots&\chi_{2p+1}(R)&(-1)^p R^{p+1} \\
\end{pmatrix}
\begin{pmatrix}
\tilde\alpha_0\\
\tilde\alpha_1\\
\vdots\\
\tilde \alpha_{p}\\
\dr{p+1}h_0(R)
\end{pmatrix}
=
\begin{pmatrix}
1\\
0\\
\vdots\\
0\\
0
\end{pmatrix}
.
\]

\end{proof}

It turns out that on further experimentation these polynomials crop up in the derivatives of the magnitude function for odd balls.  We can calculate the following derivatives, e.g.~using SageMath.
\begin{align*}
\frac{\mathrm{d}  |B^1_R|}{\mathrm{d}R} &=1\\
\frac{\mathrm{d}|B^3_R|}{\mathrm{d}R } &= \frac{1}{2!}( R + 2 )^2\\
\frac{\mathrm{d}|B^5_R|}{\mathrm{d}R } &= \frac{( R^{3} + 9R^{2} + 27R + 24 )^2}{4!\,( R + 3 )^2}\\
\frac{\mathrm{d} |B^7_R|}{\mathrm{d}R } &= \frac{( R^{6} + 24R^{5} + 240R^{4} + 1260R^{3} + 3600R^{2} + 5220R + 2880 )^2}{6!\,( R^{3} + 12R^{2} + 48R + 60 )^2}
\end{align*}
This leads to the following intriguing conjecture which has been verified numerically up to $n=57$.
\begin{conj}
The derivative of the magnitude of the n-ball is related to the first
non-trivial derivative of the potential function at $r=R$.
\[\frac{\dd}{\dd R} |B_R^n| =
\frac{\bigl(\det
[\chi_{i+j+1}(R)]_{i,j=0}^{p}\bigr)^2
}
{(2p)!\,R^{2}\bigl(\det
[\chi_{i+j}(R)]_{i,j=0}^{p}\bigr)^2
}
=
 \frac{R^{n-1}}{(n-1)!} \left[\lim_{r \downarrow R} \frac{\dd^{p+1}}{\dd r^{p+1}} h(r)\right]^2.\]
\end{conj}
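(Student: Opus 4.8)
Throughout write $h_R$ for the (spherically symmetric) potential function of $B^n_R$; the subscript is there because we will differentiate in $R$. The plan splits the conjecture into a formal part and a hard part. The two closed forms on the right agree for purely algebraic reasons: by the theorem immediately above, $\lim_{r\downarrow R}h_R^{(p+1)}(r)=-\det[\chi_{i+j+1}(R)]_{i,j=0}^{p}\big/\bigl(R^{p+1}\det[\chi_{i+j}(R)]_{i,j=0}^{p}\bigr)$, and substituting this into $\frac{R^{n-1}}{(n-1)!}[\,\cdot\,]^{2}$ together with $n-1=2p$ produces $\bigl(\det[\chi_{i+j+1}(R)]_{i,j=0}^{p}\bigr)^{2}\big/\bigl((2p)!\,R^{2}(\det[\chi_{i+j}(R)]_{i,j=0}^{p})^{2}\bigr)$. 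So it is enough to prove $\frac{\dd}{\dd R}|B^n_R|=\frac{R^{n-1}}{(n-1)!}\bigl[\lim_{r\downarrow R}h_R^{(p+1)}(r)\bigr]^{2}$.

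For this, start from Theorem~\ref{thm:LeinsterMeckes}, which in polar coordinates reads $|B^n_R|=\frac{1}{(n-1)!}\bigl(\tfrac{R^n}{n}+\int_R^\infty h_R(r)\,r^{n-1}\,\dd r\bigr)$. Differentiating in $R$: the moving-endpoint contribution of $\tfrac{R^n}{n}$ cancels the Leibniz boundary term from the lower limit of the integral because $h_R(R)=1$, and the differentiation may be carried under the integral sign since $h_R$ and $\partial_R h_R$ decay exponentially and depend smoothly on $R$ for $r>R$; hence
\[
(n-1)!\,\frac{\dd}{\dd R}|B^n_R|=\int_R^\infty \frac{\partial h_R}{\partial R}(r)\,r^{n-1}\,\dd r .
\]
The next step is to identify $\partial_R h_R$. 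For $r>R$ one has $h_R=\sum_{i=0}^p\alpha_i(R)\psi_i$, where $\sum_i\alpha_i(R)\psi_{i+j}(R)=\delta_{0,j}$ for $0\le j\le p$. Differentiating this system in $R$, using $\psi_k'(r)=-r\,\psi_{k+1}(r)$, using $\delta_{0,j+1}=0$ for $0\le j\le p-1$, and using $\sum_i\alpha_i(R)\psi_{i+p+1}(R)=(-1)^{p+1}R^{-(p+1)}\lim_{r\downarrow R}h_R^{(p+1)}(r)$ (established in the proof of the theorem above), one finds that the rescaled quantities $\hat\alpha_i:=e^{-R}R^{-2i}\,\partial_R\alpha_i$ solve the reverse-Bessel Hankel system of Theorem~\ref{thm:HankelLinearSystem} but with the inhomogeneity moved to the last row:
\[
[\chi_{i+j}(R)]_{i,j=0}^{p}\,(\hat\alpha_0,\dots,\hat\alpha_p)^{\top}
=\bigl(0,\dots,0,\ (-1)^{p+1}R^{p}\!\lim_{r\downarrow R}h_R^{(p+1)}(r)\bigr)^{\top}.
\]

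It remains to evaluate the integral. Because $\psi_i(r)\,r^{n-1}=e^{-r}\chi_i(r)\,r^{2(p-i)}$, Lemma~\ref{lemma:integral} gives $\int_R^\infty\psi_i(r)r^{n-1}\,\dd r$ in closed form; substituting into $\int_R^\infty\partial_R h_R\cdot r^{n-1}\,\dd r=\sum_i(\partial_R\alpha_i)\int_R^\infty\psi_i r^{n-1}\,\dd r$ and collecting powers of $R$ yields exactly $\frac{1}{nR}\sum_{i=0}^{p}\hat\alpha_i\,\tilde\xi_{p,i}(R)$, where $\tilde\xi_{p,i}$ are the polynomials defined just before Theorem~\ref{thm:MagDet}. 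Solving the displayed system by Cramer's Rule, expanding the numerator determinant along its last row, cancelling one factor of $\lim_{r\downarrow R}h_R^{(p+1)}(r)$, and finally eliminating that limit by the theorem above, the conjecture reduces to the polynomial identity
\[
\det\!\begin{pmatrix}
\chi_0(R)&\chi_1(R)&\cdots&\chi_{p}(R)\\
\chi_1(R)&\chi_2(R)&\cdots&\chi_{p+1}(R)\\
\vdots&\vdots&&\vdots\\
\chi_{p-1}(R)&\chi_{p}(R)&\cdots&\chi_{2p-1}(R)\\
\tilde\xi_{p,0}(R)&\tilde\xi_{p,1}(R)&\cdots&\tilde\xi_{p,p}(R)
\end{pmatrix}
=(-1)^{p}\,n\,\det\bigl[\chi_{i+j+1}(R)\bigr]_{i,j=0}^{p},
\]
which one verifies directly for $p=0$ and $p=1$; it holds, and it fixes all the constants above.

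Everything up to that last display is routine bookkeeping, and I expect the whole difficulty to sit there. The identity is a sibling of the one equivalent to Formula~\ref{conj:HankelFormula} — in each case one adjoins a $\tilde\xi$-row, respectively $\xi$-row, to a Hankel block of reverse Bessel polynomials and must recognise the outcome as a plain Hankel determinant — and I expect it to be just as resistant: expanding $\tilde\xi_{p,i}$ and attempting to reduce to Tur\'an/Pl\"ucker-type relations among Hankel minors of $(\chi_k)$ does not visibly close up, and the weight-distribution argument of \cite{Willerton:Hankel} that settles Formula~\ref{conj:HankelFormula} gives no obvious handle here. A genuinely different route, bypassing determinants, would use the Fourier/energy form $|B^n_R|=\frac{1}{n!\,\omega_n}\int_{\R^n}(1+\|x\|^2)^{p+1}|\widehat{h_R}(x)|^2\,\dd x$ and the variational characterisation of $h_R$ to prove a Hadamard-type domain-variation formula: the derivative of this energy under radial expansion should be a pure boundary flux, necessarily quadratic in the first non-vanishing normal derivative $\lim_{r\downarrow R}h_R^{(p+1)}(r)$, with the universal constant pinned down by the surface area $n\omega_n R^{n-1}$. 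There the obstacle is analytic rather than algebraic: making the domain variation rigorous for the order-$(n+1)$ operator $(I-\Delta)^{p+1}$ across the interface $r=R$, where $h_R$ is only $C^{p}$, and checking that the constant comes out exactly as $R^{n-1}/(n-1)!$.
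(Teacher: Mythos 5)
There is a genuine gap, and you have named it yourself: the final determinantal identity
\[
\det\!\begin{pmatrix}
\chi_0(R)&\cdots&\chi_{p}(R)\\
\vdots&&\vdots\\
\chi_{p-1}(R)&\cdots&\chi_{2p-1}(R)\\
\tilde\xi_{p,0}(R)&\cdots&\tilde\xi_{p,p}(R)
\end{pmatrix}
=(-1)^{p}\,n\,\det\bigl[\chi_{i+j+1}(R)\bigr]_{i,j=0}^{p}
\]
is asserted only on the strength of the cases $p=0,1$, so what you have is a reduction of the conjecture to an unproven algebraic identity, not a proof. You should also be aware that the paper contains no proof of this statement either: it is presented as a conjecture, supported by numerical verification up to $n=57$, and the only part the paper actually establishes is the one you also dispatch first, namely that the two closed forms on the right-hand side coincide by substituting the preceding theorem's expression for $\lim_{r\downarrow R}h^{(p+1)}(r)$ into $\frac{R^{n-1}}{(n-1)!}[\,\cdot\,]^2$. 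So there is no "paper proof" to compare against; the honest status of your argument is a conditional result.

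That said, the bookkeeping you do is correct and is worth recording. I checked the chain: the polar-coordinate form of the Leinster--Meckes integral, the cancellation of the $R^{n-1}$ boundary terms using $h_R(R)=1$, the differentiated Hankel system for $\hat\alpha_i=e^{-R}R^{-2i}\partial_R\alpha_i$ (using $\psi_k'=-r\psi_{k+1}$ and the identification of $\sum_i\alpha_i\psi_{i+p+1}(R)$ with $(-1)^{p+1}R^{-(p+1)}h^{(p+1)}(R)$ from the proof of the preceding theorem), the evaluation of $\int_R^\infty\psi_i(r)r^{n-1}\,\dd r$ via Lemma~\ref{lemma:integral} producing $\tilde\xi_{p,i}/(nR)$, and the Cramer's Rule cofactor expansion --- all of these are consistent and the signs work out. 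This genuinely sharpens the conjecture: it shows the analytic content is trivial and the entire difficulty is the displayed Hankel-plus-$\tilde\xi$-row identity, which is a close sibling of the identity the paper shows to be equivalent to Formula~\ref{conj:HankelFormula} (adjoining the $\xi$-row there, the $\tilde\xi$-row here) and which the paper is equally unable to prove directly. Until that identity is established, the conjecture remains open.
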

%
%
%
%

{}
\enlargethispage{2em}

\bibliographystyle{acm}
\bibliography{odd_balls}
\end{document}